 \newtheorem{theorem}{Theorem}[section]
 \newtheorem{corollary}[theorem]{Corollary}
 \newtheorem{lemma}[theorem]{Lemma}
 \newtheorem{proposition}[theorem]{Proposition}
 \theoremstyle{definition}
 \theoremstyle{remark}
 \newtheorem{remark}[theorem]{Remark}
 \newtheorem{example}[theorem]{Example}
\newcommand{\sig}{\operatorname{sign}}
\begin{document}

\title[Approximation of integration maps of vector measures]{\bf
Approximation of integration maps of vector measures and limit
representations of Banach function spaces} 

\author[E. Jim\'enez Fern\'andez]{Eduardo Jim\'enez Fern\'andez}
\address{E. Jim\'enez Fern\'andez\\ Departamento de Econom\'{\i}a \\
Universitat Jaume I \\ Campus del Riu Sec, s/n \\ 12071
Castell\'o de la Plana  \\ Spain} \email{jimeneze@uji.es}

\author[E.A.  S\'{a}nchez P\'{e}rez]{Enrique A.  S\'{a}nchez P\'{e}rez}
\address{E.A.  S\'{a}nchez P\'{e}rez\\ Instituto Universitario de
  Matem\'{a}tica Pura y Aplicada\\ 
Universitat Polit\`ecnica de Val\`encia \\ Camino de Vera s/n\\
46022 Valencia \\ Spain} \email{easancpe@mat.upv.es}

\author[D. Werner]{Dirk Werner}
\address{D. Werner\\ Fachbereich Mathematik und Informatik \\
Freie Universitat Berlin \\ Arnimallee 6 \\
14195 Berlin \\ Germany } \email{werner@math.fu-berlin.de}

\subjclass[2010]{Primary 46G05, 28B05; Secondary 46E30, 47B07, 47B38}
\keywords{ Vector measures, integration map, Daugavet property.}

\thanks{ E. Jim\'enez Fern\'andez was supported by
  Junta de Andaluc\'{\i}a and FEDER grant and P09-FQM-4911 (Spain) and
  by Ministerio de  Econom\'{\i}a, Industria y Competitividad (Spain) (project
  MTM2012-36740-C02-02).  E.A. S\'{a}nchez P\'{e}rez was supported by
  Ministerio de  Econom\'{\i}a, Industria y Competitividad (Spain) (project
  MTM2016-77054-C2-1-P)}

\maketitle

\begin{abstract}
We study when the integration maps of  vector measures can be computed as pointwise limits of their finite rank Radon-Nikod\'ym derivatives.
We will show that this can sometimes be done, but there are also principal cases in which this cannot be done.
The positive cases are obtained
using the circle of ideas of the approximation property for Banach spaces. The negative ones are given by means of an adequate
use of the Daugavet property. As an application, we analyse when the norm in a space of integrable functions $L^1(m)$ can be computed as a limit of the norms of the spaces of
integrable functions with respect to the Radon-Nikod\'ym derivatives of $m$.

\end{abstract}

\section{Introduction}

Let $X$ be a Banach space and $m$ be an $X$-valued countably  additive vector measure. Consider
the space $L^1(m)$ of (scalar) integrable functions with respect to $m$. In this paper we are
 interested in the analysis of the (pointwise) approximation  by means of finite rank operators
  of the associated integration map  $I_m:L^1(m) \to X$. Actually, we
  will choose nets of finite rank operators of a special class, which are
  the finite rank Radon-Nikod\'ym derivatives of $m$.

  We will show that this can be done in the case that we consider weakly compact operators having values in a space that has the approximation property.
  This allows us to prove a positive result for a big class of usual
  examples, as for instance when $X$ is a reflexive space. However,
  there are cases -- and some of them
  can even be  considered as canonical -- in which these results
  cannot be applied. Martingale type procedures also allow us to find
  approximations of integration maps by finite rank operators for some
  of these situations, for example the identity map (considered as an
  integration operator) when $m$ is 
  a (scalar) probability measure without atoms,  and so the space
  $L^1(m)$ is a classical Lebesgue $L^1$-space. However, we show also
  that the unconditional pointwise approximation by series of finite
  rank operators has some constraints, and cannot be expected in
  general. For doing this, we will use the circle of ideas of the 
  Daugavet property. 
A Banach space $X$ is said to have the Daugavet property if every rank
  one operator 
 $T:X\to X$ satisfies the Daugavet equation, that is $\|\mathrm{Id}+
  T\|=1+ \|T\|$. In recent years, great efforts have been made for
  studying the Daugavet  property for Banach spaces and Banach
  lattices, and its natural extension to other operators different
  from  the identity map, which  are the so-called Daugavet centers. The
  reader can find also information about the Daugavet equation for
  general maps \cite{BrSaWe} and the references therein. 

The reason why this property is relevant for us is that the Daugavet equation as an estimate of the norm of the sum of a pair of operators may be used for
 establishing clear criteria of when a particular map cannot be approximated pointwise unconditionally by series of finite rank operators;
 see for example Theorem~2.9 in \cite{BoKa}.
Therefore, for establishing some limits to the approximation by finite
 rank maps, we are interested in analysing when the integration
 operator $I_m : L^1(m)\to X$
satisfies the Daugavet equation.

\section{Preliminaries}

The notation that we use is standard. If $X$ is a Banach space, we write $B_X$ for its closed unit ball, and $X^*$ for its  dual space.
Let $(\Omega,\Sigma,\mu)$ be a positive finite measure space, and
write $L^0(\mu)$ for the space of all measurable real functions on
$\Omega$ (functions which are equal $\mu$-a.e.\  are identified). We
say that a Banach space  $X(\mu)$  of functions in $L^0(\mu)$ is a
Banach function space with respect  to $\mu$ if $\chi_\Omega \in
X(\mu)$, and for each pair  of measurable functions $f,g$, if $|f|\leq
|g|$ with $g\in X(\mu)$, then $f\in X(\mu)$ and $\|f\|\leq \|g\|$. We
will write $X$ instead of $X(\mu)$ if the measure $\mu$ is fixed in
the context. We say that $X(\mu)$ is $\sigma$-order continuous  if for
every sequence of functions $f_n\in X(\mu)$ with $f_n\downarrow 0$ it follows that $\|f_n\|_{X(\mu)}\to 0$.
We will also use the following (non standard) notation. If $\mu$ and
$\nu$ are measures on the same measurable space and $\nu$ is
absolutely continuous with respect to $\mu$, we will write $X(\mu)
\Subset Y(\nu)$ if the operator that maps the class of $\mu$-a.e.\
equal functions of $f$ to the class of $\nu$-a.e.\  equal functions of the same $f$ is continuous.

Throughout the paper $m:\Sigma \rightarrow X$ will be a countably
additive vector measure, where $E$ is a Banach space; the reader can
find all the information that is needed -- and shortly explained below
-- on vector measures and integration  in \cite{dies,libro}. For each
element $x^* \in X^*$ the
formula $\langle m,x^* \rangle(A):=\langle
m(A),x^*\rangle$, $A \in \Sigma$, defines a (countably
additive) scalar measure. We write $|\langle m,x^* \rangle|$
for its variation.  The function $\| m \|$ given on a set $A\in \Sigma$ by
$$
\|m\|(A)= \sup\{ |\langle m,x^\prime \rangle|(A):x^* \in X^*,\ 
\|x^* \|\leq 1 \}
$$
is called the semivariation of $m$. It is equivalent to the variation
if $m$ is a scalar measure. A (vector or scalar valued) measure $m$ is
absolutely continuous with respect to $\mu$ if $\mu(A)=0$ implies
$\|m\|(A)=0$; in this case we write $m\ll \mu$. It is well known that
there is always a (non-unique) measure of the form $|\langle m,x^* \rangle|$
such that $m$ is absolutely continuous with respect to it; we will
call such a measure a Rybakov measure for $m$. Let $\mu$ be a
finite measure; we say that a Banach space valued vector measure
$m:\Sigma \to X$ is equivalent to $\mu$ if for all $A \in \Sigma$,
$\mu(A)=0$ if and only if $\|m\|(A)=0$. 

The space $L^1(m)$ of integrable functions with respect to $m$ is a
Banach function space over any Rybakov measure; throughout the paper, we
will fix one of them, which will usually be denoted by $\mu$. 
The elements of this space are (classes of $\mu$-a.e.\  measurable)
functions $f$ that are integrable with 
respect to each scalar measure $\langle m,x^* \rangle$, and
for every $A \in \Sigma$ there is an element $\int_A f \,dm \in X$
such that $\langle\int_A f \,dm,x^* \rangle=\int_A f \,d\langle
m,x^* \rangle$ for every $x^* \in X^*$;
the space $L^1(m)$ of $m$-a.e.\  equal $m$-integrable functions is an order continuous Banach lattice endowed with the norm
$$
\|f\|_{L^1(m)} := 
\sup_{x^* \in B_{X^*}} \int |f| \, d | \langle m, x^* \rangle |, \quad f \in L^1(m),
$$
 and the $m$-a.e.\  order. The associated
integration operator $I_m: L^1(m)\to X$ is defined by $I_m(f):=\int_{\Omega}f \,d m,$ where $f\in L^1(m)$. It is linear and continuous and $\|I_m\|=1$.

A continuous operator $T:X\to X$ on a Banach space is said to satisfy
 the Daugavet equation 
 if the following formula holds,
$$
\|\mathrm{Id} + T\|=1+\|T\|.
$$
A Banach space $X$ is said to satisfy the Daugavet property if 
$\|\mathrm{Id}+ T\|=1+\|T\|$ is 
satisfied for every rank one operator. It is well known that, if this
happens, then the same equation holds  for each weakly compact or
merely  Radon-Nikod\'ym  operator \cite{ams2000}.
Recall that
 a subset $A$ of a Banach space is said to have the Radon-Nikod\'ym
 property if every  closed convex subset $B \subseteq A$ is the closed
 convex hull of its  denting points; an operator $T$
  is said to be a Radon-Nikod\'ym operator if the closure of $T(B_X)$
 has the Radon-Nikod\'ym property. 
   Weakly compact operators belong to this class.

The reader can find a review on the classical results on the Daugavet property
in \cite{Dirk-IrBull}; for the case of Banach lattices of functions,
which is particularly important for this paper, see
\cite{ackama12,ackama15} and the references therein. 
The following generalization of the notion of Daugavet property will be used.
Following \cite[Definition~1.2]{BoKa}, we say that a continuous linear
operator $G:X \to Y$ between Banach spaces 
is a Daugavet center if $\| G+T\|= \|G\| + \|T\|$ is fulfilled for
every rank-1 operator $T:X \to Y$. For this notion and the main
properties which are necessary for this paper, see also
\cite{bosen,BoKa}.

\section{Approximation of weakly compact integration operators of a vector measure  by its finite
rank Radon-Nikod\'ym derivatives}

The approximation of weakly compact operators by means of finite rank operators is closely
related to the approximation property  of the Banach space $X$ where
the operators are defined, or to this property for its dual space $X^*$. A series of
papers (e.g., \cite{LiOja2004}) 
recently published have shown that  $X^*$ has the approximation property if and only if for every Banach space $Z$ and
every weakly compact operator $T:Z \to X$ there is a net of finite rank operators $T_\alpha:Z \to X$
 of norm $\|T_\alpha\| \le \|T\|$ converging to $T$ pointwise. The problem goes back to the
 memoir of Grothendieck \cite[p.~184]{GRO-mem}, 
and these developments have provided some useful tools and solutions to
 some long-standing questions that will provide the key for our
 arguments. The question is: When and how can an integration map of a
 vector measure  be approximated by nets 
  of finite rank operators? 
As the reader will see, the result will be used to study when,
  given a vector measure $m$, the norm of each element in the corresponding space of integrable functions
can be
  approximated as a limit of a net of the natural 
   finite components of
  $I_m$, which we call finite rank Radon-Nikod\'ym derivative operators of $m$.

\subsection{Integration operators and Radon-Nikod\'ym derivatives of vector measures}

Let us start our analysis by establishing several results regarding the set of Radon-Nikod\'ym derivatives of the scalarizations of a given vector measure. Let $m:\Sigma \to E$ be a  (countably additive) vector measure.

\begin{lemma} \label{defphi}
Consider a vector measure $m:\Sigma \to X$ and a fixed finite measure $\mu$  equivalent to $m$ representing  the duality
(that is for $f \in L^1(m)$ and $g \in (L^1(m))'$, the K\"othe dual, 
$ \langle f,g
\rangle := \int f g \,d \mu$; it may be a Rybakov measure).
Then for every $x^* \in X^*$,
$$
I_m^*(x^*)= \varphi^m_{x^*}:= \frac{ d \langle m, x^* \rangle}{ d \mu}
\in (L^1(m))', 
$$
where $\frac{ d \langle m, x^* \rangle}{ d \mu}$ represents the
Radon-Nikod\'ym derivative of the measure 
$A \mapsto \langle m(A),x^* \rangle $ with respect to $\mu$. Moreover,
$\| \varphi^m_{x^*} \|_{(L^1(m))'} \le \|x^*\|_{X^*}$ for all $x^* \in
X^*$. 
\end{lemma}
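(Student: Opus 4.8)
The plan is to evaluate the adjoint $I_m^*(x^*)$ on an arbitrary $f\in L^1(m)$ and to recognize the outcome as integration against $\varphi^m_{x^*}$ with respect to $\mu$. Since $\mu$ is equivalent to $m$, each scalarization satisfies $\langle m,x^*\rangle \ll \mu$ (indeed $|\langle m,x^*\rangle|\le \|x^*\|_{X^*}\,\|m\|$ and $\|m\|\ll\mu$), so the Radon-Nikod\'ym derivative $\varphi^m_{x^*}=d\langle m,x^*\rangle/d\mu\in L^1(\mu)$ is well defined. Recall also that $L^1(m)\subseteq L^1(|\langle m,x^*\rangle|)$ for every $x^*\in X^*$, so the chain rule for Radon-Nikod\'ym derivatives gives $\int_\Omega f\,d\langle m,x^*\rangle = \int_\Omega f\,\varphi^m_{x^*}\,d\mu$ for every $f\in L^1(m)$ (in particular $f\varphi^m_{x^*}\in L^1(\mu)$). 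Combining this with the defining property of the integral with respect to $m$ yields
$$
\langle I_m(f),x^*\rangle = \Bigl\langle \int_\Omega f\,dm,\,x^*\Bigr\rangle = \int_\Omega f\,d\langle m,x^*\rangle = \int_\Omega f\,\varphi^m_{x^*}\,d\mu = \langle f,\varphi^m_{x^*}\rangle
$$
for all $f\in L^1(m)$, which is the asserted identity $I_m^*(x^*)=\varphi^m_{x^*}$ once we know that $\varphi^m_{x^*}$ genuinely belongs to $(L^1(m))'$. The identification of $\varphi^m_{x^*}$ with the functional $f\mapsto\langle f,\varphi^m_{x^*}\rangle$ it induces is unambiguous, since $L^1(m)$ contains all simple functions, so distinct elements of $(L^1(m))'$ induce distinct functionals on $L^1(m)$.

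To obtain the membership $\varphi^m_{x^*}\in(L^1(m))'$ together with the norm bound, fix $f\in L^1(m)$ and set $h:=\sig(\varphi^m_{x^*})$. Since $\bigl|\,|f|\,h\,\bigr|\le|f|$ and $|f|\in L^1(m)$, the ideal property of the Banach function space $L^1(m)$ gives $|f|\,h\in L^1(m)$ with $\bigl\|\,|f|\,h\,\bigr\|_{L^1(m)}\le\bigl\|\,|f|\,\bigr\|_{L^1(m)}=\|f\|_{L^1(m)}$. Then, using the chain rule once more, the defining property of $\int\,dm$, and $\|I_m\|=1$,
$$
\int_\Omega |f\,\varphi^m_{x^*}|\,d\mu = \int_\Omega |f|\,h\,d\langle m,x^*\rangle = \Bigl\langle \int_\Omega |f|\,h\,dm,\,x^*\Bigr\rangle \le \|x^*\|_{X^*}\,\bigl\|\,|f|\,h\,\bigr\|_{L^1(m)} \le \|x^*\|_{X^*}\,\|f\|_{L^1(m)}.
$$
Taking the supremum over $f\in B_{L^1(m)}$ shows $\varphi^m_{x^*}\in(L^1(m))'$ with $\|\varphi^m_{x^*}\|_{(L^1(m))'}\le\|x^*\|_{X^*}$, as required.

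The genuinely routine ingredients here are the chain rule for Radon-Nikod\'ym derivatives and the inclusion of bounded measurable functions in $L^1(m)$. The one step needing a little attention is the sign trick in the last display, which is what converts the duality pairing with $x^*$ into a norm estimate for the K\"othe-dual functional; alongside it one must keep track of the canonical (injective, isometric into) embedding $(L^1(m))'\hookrightarrow(L^1(m))^*$ through which the identity $I_m^*(x^*)=\varphi^m_{x^*}$ is to be read.
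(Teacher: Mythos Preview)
Your proof is correct and follows essentially the same approach as the paper: both identify $I_m^*(x^*)$ with $\varphi^m_{x^*}$ via the chain rule $\langle I_m(f),x^*\rangle=\int f\,d\langle m,x^*\rangle=\int f\varphi^m_{x^*}\,d\mu$, and both derive the norm bound from $\|I_m\|=1$. The only cosmetic difference is that you make the sign trick $h=\sig(\varphi^m_{x^*})$ explicit in order to pass from the pairing $\langle f,\varphi^m_{x^*}\rangle$ to the K\"othe norm $\sup_{f\in B_{L^1(m)}}\int|f\varphi^m_{x^*}|\,d\mu$, whereas the paper writes $\|\varphi^m_{x^*}\|_{(L^1(m))'}=\sup_{f\in B_{L^1(m)}}\bigl|\int f\varphi^m_{x^*}\,d\mu\bigr|$ directly (the two suprema agree by solidity of $L^1(m)$).
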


\begin{proof}
Take a function $f \in L^1(m)$ and $x^* \in X^*$. Then, since $f$ is
$m$-integrable and $\langle m, x^* \rangle \ll \mu,$ we have 
$$
\langle I_m(f), x^* \rangle = \int f \, d \langle m, x^* \rangle = \int f
\frac{d \langle m, x^* \rangle}{d \mu} d \mu 
=\int f \varphi^m_{x^*} d \mu.
$$
Since this is well-defined for each $f \in L^1(m)$, we have that $
\varphi^m_{x^*} \in (L^1(m))'$; more specifically, 
$$
\Bigl| \int f \frac{d \langle m, x^* \rangle}{ d \mu} d \mu \Bigr| 
\le \|f\|_{L^1(m)} \cdot \Bigl\|\frac{d \langle m, x^* \rangle}{d
  \mu}\Bigr\|_{(L^1(m))'}. 
$$
Moreover,
\begin{align*}
\| \varphi^m_{x^*}\|_{(L^1(m))'} 
&= \sup_{f \in B_{L^1(m)}} \Bigl| \int f \frac{d \langle m, x^*
  \rangle}{d \mu} d \mu \Bigr| 
= \sup_{f \in B_{L^1(m)}}  \Bigl| \Bigl\langle \int f \,d m, x^*
\Bigr\rangle \Bigr| \\
&\le \sup_{f \in B_{L^1(m)}}  \Bigl\| \int f \,dm \Bigr\| \cdot \|x^*\|_{X^*}
\le \|x^*\|_{X^*}.  \qedhere
\end{align*}
\end{proof}

Note, however, that the map $x^* \mapsto \varphi^m_{x^*}$ need not be
injective in general.

\subsection{Pointwise convergence of nets of norms to the norm in $L^1(m)$}

In this section we prove the main general result concerning convergence
of nets of norms of spaces $L^1(m_\eta)$ to $\| \cdot\|_{L^1(m)}$.

Fix a finite measure $\mu$.
Let $\{X_\tau(\mu_\tau): \tau \in \Lambda \}$ be a net of spaces, where all the measures $\mu_\tau$ are absolutely continuous with respect to $\mu$. We will study when
a Banach function space $X(\mu)$ such that
 $X(\mu) \subseteq \bigcap_{\tau \in \Lambda} X_\tau(\mu_\tau)  $ can be computed as  a pointwise limit of the norms as
$\lim_{\tau} \|f\|_{X_\tau(\mu_\tau)} = \|f\|_{X(\mu)}$ for all $f \in X(\mu).$

In what follows
we are interested in analysing the properties under which we have that a net of vector measures $\{m_\tau\}$ gives that the norm in $L^1(m)$ is a pointwise limit of $\{L^1(m_\tau) \}$. We need to introduce some new elements.

Let $A \in \Sigma$. We define $h_A$ as the function in $L^\infty(\mu)$ given by $h_A= \chi_A - \chi_{A^c}$; that is, the definition  makes sense $\mu$-a.e. We will denote by
$\mathcal H$ all the (classes of)  functions defined in this way.

\begin{lemma} \label{norm}
Let $m:\Sigma \to X$ be a countably additive vector measure.
For all functions $f \in L^1(m)$, we have
$$
\|f\|_{L^1(m)} = \sup_{A \in \Sigma} \Bigl\| \int f h_A \, dm \Bigr\|_X.
$$
\end{lemma}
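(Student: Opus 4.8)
The plan is to reduce the identity to a one-dimensional (scalar) computation. Start from the defining formula $\|f\|_{L^1(m)} = \sup_{x^* \in B_{X^*}} \int |f|\, d|\langle m,x^*\rangle|$. For each $A \in \Sigma$ the function $f h_A$ lies in $L^1(m)$, because $|h_A| = 1$ and $L^1(m)$ is an ideal in $L^0(\mu)$; hence $\int f h_A\, dm \in X$ and $\langle \int f h_A\, dm, x^*\rangle = \int f h_A\, d\langle m,x^*\rangle$ for every $x^* \in X^*$. Using $\|y\|_X = \sup_{x^* \in B_{X^*}} |\langle y,x^*\rangle|$ and interchanging the two suprema (always legitimate for suprema), the right-hand side of the claimed identity equals
$$
\sup_{A \in \Sigma}\ \sup_{x^* \in B_{X^*}} \Bigl| \int f h_A\, d\langle m,x^*\rangle \Bigr|
= \sup_{x^* \in B_{X^*}}\ \sup_{A \in \Sigma} \Bigl| \int f h_A\, d\langle m,x^*\rangle \Bigr| .
$$
So it suffices to show, for each fixed $x^* \in B_{X^*}$, writing $\nu := \langle m,x^*\rangle$ (a countably additive real measure with $\nu \ll \mu$, and $f \in L^1(|\nu|)$ since $f \in L^1(m)$), the scalar identity $\sup_{A \in \Sigma} | \int f h_A\, d\nu | = \int |f|\, d|\nu|$.

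For this scalar identity I would use the polar decomposition $\rho := d\nu/d|\nu|$, so that $|\rho| = 1$ holds $|\nu|$-a.e.; note $h_A$ is well defined $|\nu|$-a.e. because $\nu \ll \mu$. The inequality ``$\le$'' is immediate, since $|\int f h_A\, d\nu| = |\int h_A (f\rho)\, d|\nu|| \le \int |f\rho|\, d|\nu| = \int |f|\, d|\nu|$. For ``$\ge$'', choose $B := \{ f\rho \ge 0 \} \in \Sigma$; then $h_B (f\rho) = |f\rho| = |f|$ pointwise $|\nu|$-a.e., so $\int f h_B\, d\nu = \int h_B (f\rho)\, d|\nu| = \int |f|\, d|\nu|$, and in fact the inner supremum is attained at $B$. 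Feeding this back into the interchange of suprema above gives $\sup_{A \in \Sigma} \|\int f h_A\, dm\|_X = \sup_{x^* \in B_{X^*}} \int |f|\, d|\langle m,x^*\rangle| = \|f\|_{L^1(m)}$, as desired.

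I do not expect any genuine obstacle: the argument is essentially the observation that $\sup_{h \in \mathcal H} |\int f h\, d\nu|$ recovers the total variation integral $\int |f|\, d|\nu|$ via a Hahn-type choice of set, combined with the standard duality $\|\cdot\|_X = \sup_{B_{X^*}} |\langle \cdot, x^*\rangle|$ and the defining formula for $\|\cdot\|_{L^1(m)}$. The only minor points to keep track of are that $f h_A \in L^1(m)$ so that $\int f h_A\, dm$ makes sense, that the maximizing set $B$ is genuinely measurable (it depends on $x^*$, but only its $|\nu|$-measurability is needed), and that the suprema over $A$ and over $x^*$ may be exchanged.
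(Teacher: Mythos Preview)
Your proof is correct and follows essentially the same approach as the paper's: both reduce to the scalar case via the duality $\|y\|_X=\sup_{x^*\in B_{X^*}}|\langle y,x^*\rangle|$, then use the polar (Hahn) decomposition $\rho=d\nu/d|\nu|$ to exhibit a set $B$ with $h_B f\rho=|f|$, and get the reverse inequality from $|fh_A|=|f|$. Your presentation is a bit tidier---you interchange the two suprema first and then prove the one-dimensional identity $\sup_A|\int f h_A\,d\nu|=\int|f|\,d|\nu|$---whereas the paper writes out explicitly the set $\{\operatorname{sign}(f)>0\}\cap A(x^*)\cup\{\operatorname{sign}(f)\le 0\}\cap A(x^*)^c$, which is just your $B=\{f\rho\ge0\}$ up to a null set.
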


\begin{proof}
This is a consequence of a direct calculation; we write $A(x^*)$ for
the measurable set, depending on $x^*$, where the Radon-Nikod\'ym
derivative of $\langle m, x^* \rangle $ with respect to $|\langle m,
x^* \rangle| $   equals~$1$. Then 
\begin{align*}
\|f\|_{L^1(m)} 
&= \sup_{x^* \in B_{X^*}} \, \int |f|  \, d|\langle m, x^* \rangle| \\
&= \sup_{x^* \in B_{X^*}} \, \int f (\chi_{\{\sig(f)>0\}} -
\chi_{\{\sig(f) \le 0\}} ) (\chi_{A(x^*)} - \chi_{A(x^*)^c}) \,
d\langle m, x^* \rangle \\ 
&= \sup_{x^* \in B_{X^*}} \, \Big[ \int f \,
(\chi_{\{\sig(f)>0\} \cap A(x^*) \} \cup \{ \sig(f) \le
  0\} \cap A(x^*)^c \} } )  \, d\langle m, x^* \rangle \\
&\mbox{\qquad}
- \int f \, ( \chi_{\{\sig(f)>0\} \cap A(x^*)^c \} \cup
  \{\sig(f) \le 0\} \cap A(x^*) \} } ) \, d\langle m, x^*
\rangle \Big]. 
\end{align*}
Since the sets appearing in the characteristic functions in the
expression above are complementary and using the fact that the
function is integrable with respect to $m$, we have that this
expression is 
$$
\le \sup_{x^* \in B_{X^*}, \, A \in \Sigma} \,
\Big\langle \int f h_A \, dm , x^* \Big\rangle  = \sup_{A \in \Sigma}
\Bigl\| \int f h_A \, dm \Bigr\|_X. 
$$

The converse inequality is a direct consequence of the fact that
$|f|=|f \, h_A|$ for all $A \in \Sigma$, $\| \cdot\|_{L^1(m)}$ is a
lattice norm, and the straightforward  inequality 
\[
\Bigl\| \int f h_A \, dm \Bigr\|_X \le \| f h_A \|_{L^1(m)}= \| f
\|_{L^1(m)}.
\qedhere
\]
\end{proof}

\begin{lemma} \label{ap1}
Let $m,m_1:\Sigma \to X$ be a couple of vector measures and consider a
function $f \in L^1(m) \cap L^1(m_1)$. Then 
$$
\Big| \big\| f\|_{L^1(m)} - \|f\|_{L^1(m_1)} \Big| \le \sup_{A \in
  \Sigma} \Bigl\| \int f h_A \,d m - \int f h_A \,d m_1 \Bigr\|_X. 
$$
\end{lemma}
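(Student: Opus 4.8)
The plan is to apply Lemma~\ref{norm} to both $m$ and $m_1$ and then use the general fact that the supremum of a difference is bounded by the supremum of differences. First I would write, using Lemma~\ref{norm}, that
$$
\|f\|_{L^1(m)} = \sup_{A \in \Sigma} \Bigl\| \int f h_A \,dm \Bigr\|_X
\quad\text{and}\quad
\|f\|_{L^1(m_1)} = \sup_{A \in \Sigma} \Bigl\| \int f h_A \,dm_1 \Bigr\|_X,
$$
noting that both suprema are finite because $f \in L^1(m) \cap L^1(m_1)$.

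Next I would invoke the elementary inequality $|\sup_{A} a_A - \sup_{A} b_A| \le \sup_A |a_A - b_A|$, valid for any bounded families of reals indexed by the same set. Applying it with $a_A = \bigl\| \int f h_A \,dm \bigr\|_X$ and $b_A = \bigl\| \int f h_A \,dm_1 \bigr\|_X$ gives
$$
\Bigl| \|f\|_{L^1(m)} - \|f\|_{L^1(m_1)} \Bigr|
\le \sup_{A \in \Sigma} \Bigl| \, \Bigl\| \int f h_A \,dm \Bigr\|_X - \Bigl\| \int f h_A \,dm_1 \Bigr\|_X \, \Bigr|.
$$
Then, for each fixed $A$, the reverse triangle inequality in $X$ yields $\bigl| \|v\|_X - \|w\|_X \bigr| \le \|v - w\|_X$ with $v = \int f h_A \,dm$ and $w = \int f h_A \,dm_1$, and taking the supremum over $A$ produces exactly the claimed bound.

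There is no real obstacle here; the statement is a routine consequence of Lemma~\ref{norm}. The only point worth a moment's care is the justification of the scalar inequality $|\sup a_A - \sup b_A| \le \sup|a_A - b_A|$: it follows because for each $A$ one has $a_A \le b_A + \sup_{A'} |a_{A'} - b_{A'}|$, so $\sup_A a_A \le \sup_A b_A + \sup_{A'}|a_{A'} - b_{A'}|$, and symmetrically. I would present the argument in a couple of lines, chaining the two inequalities, and that completes the proof.
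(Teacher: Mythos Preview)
Your argument is correct and in fact more direct than the paper's. The paper observes that, for a fixed simple function $f$, the map $m \mapsto \sup_{A\in\Sigma}\bigl\|\int f h_A\,dm\bigr\|_X$ is a seminorm on the vector space of $X$-valued countably additive measures, applies the reverse triangle inequality for this seminorm to get the estimate for simple $f$, and then passes to general $f\in L^1(m)\cap L^1(m_1)$ by density of simple functions in both spaces. Your route bypasses both the seminorm framework and the density argument: since Lemma~\ref{norm} already holds for arbitrary $f\in L^1(m)$ (resp.\ $L^1(m_1)$), you can combine the scalar inequality $|\sup_A a_A-\sup_A b_A|\le\sup_A|a_A-b_A|$ with the reverse triangle inequality in $X$ and be done in two lines. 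A minor bonus of your formulation is that you never need to make sense of $\int f h_A\,d(m-m_1)$ for non-simple $f$; you only ever subtract two well-defined $X$-valued integrals.
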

\begin{proof}
Consider the linear space $VM$ of all countably additive vector measures from $\Sigma$ to $X$.
Fix  a simple function $f$. The function $\phi_f: \mathcal H \times VM \to X$ given by the formula
$$
\phi_f(h_A,m):=  \int f \, h_A \, dm \in X
$$
is well-defined. Note that 
$$
m \mapsto \sup_{A\in \Sigma} \Bigl\| \int f \, h_A \, dm \Bigr\|_X
$$
is a seminorm on $VM$. Consequently, if $m, m_1 \in VM$ we have that
$$
\Bigl| \sup_{A\in \Sigma} \Bigl\| \int f \, h_A \, dm \Bigr\|_X -
\sup_{A\in \Sigma} \Bigl\| \int f \, h_A \, dm_1 \Bigr\|_X \Bigr| 
\le \sup_{A\in \Sigma} \Bigl\| \int f \, h_A \, d\, (m-m_1) \Bigr\|_X
$$
for all simple functions. Thus, as a consequence of Lemma~\ref{norm}, we get
$$
\Big| \big\| f\|_{L^1(m)} - \|f\|_{L^1(m_1)} \Big| \le \sup_{A \in
  \Sigma} \Bigl\| \int f h_A \,d m - \int f h_A \,d m_1 \Bigr\|_X 
$$
for all simple functions. Since simple functions are dense in both
spaces for the norm, we get the result for all the functions in 
$ L^1(m) \cap L^1(m_1)$.
\end{proof}

The next result provides our first general approximation theorem,
which  can be applied to any net of Banach space valued vector measures. Notice that
no requirements on $X$ are imposed. Later on in this section we will
show that this situation can always be given -- that is, there always exists such a net for any
vector measure -- whenever $X$ has the approximation property and the integration map is weakly compact. Recall from Lemma \ref{defphi}
the definition of the Radon-Nikod\'ym derivatives $\varphi^m_{x^*}$ of the measure $m$.

\begin{theorem}  \label{thdos}
Let $m,m_\eta:\Sigma \to X$ be countably additive vector measures all
of them absolutely continuous with respect  to $\mu$, 
with $\eta \in \Lambda$, a directed set such that $L^1(m) \subseteq
\bigcap_{\eta \in \Lambda} L^1(m_\eta)$ 
 with norm one for the inclusion in each space $L^1(m_\eta)$. Assume
 that  we have one of the following requirements for the net 
  $\{m_\eta: \eta \in \Lambda\}$:
\begin{itemize}
\item[(i)] For each $f \in L^1(m)$,
$$
\lim_\eta \sup_{A \in \Sigma} \Bigl\| \int f h_A \,dm - \int f h_A \,d
m_\eta\Bigr\|=0. 
$$
\item[(ii)] For each $x^* \in X^*$,
$$
\lim_\eta \varphi^{m_\eta}_{x^*} = \varphi^m_{x^*}
$$
in the weak* topology.
\end{itemize}
Then for each $f \in L^1(m)$,
$$
\lim_{\eta \in \Lambda} \|f\|_{L^1(m_\eta)}= \|f\|_{L^1(m)}.
$$
\end{theorem}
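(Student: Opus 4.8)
The plan is to treat the two hypotheses separately. Under (i) there is essentially nothing to do: since $L^1(m) \subseteq L^1(m_\eta)$, every $f \in L^1(m)$ lies in $L^1(m) \cap L^1(m_\eta)$, so Lemma~\ref{ap1} applied with $m_1 = m_\eta$ gives
$$
\Bigl|\, \|f\|_{L^1(m)} - \|f\|_{L^1(m_\eta)} \,\Bigr| \le \sup_{A \in \Sigma} \Bigl\| \int f h_A \, dm - \int f h_A \, dm_\eta \Bigr\|_X ,
$$
and the right-hand side tends to $0$ by hypothesis; this settles that case at once. So the real content is the passage from (ii) to the conclusion, and here I would argue by sandwiching $\|f\|_{L^1(m_\eta)}$ between two bounds.

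Fix $f \in L^1(m)$. The easy half comes straight from the norm-one inclusion $L^1(m) \hookrightarrow L^1(m_\eta)$, which yields $\|f\|_{L^1(m_\eta)} \le \|f\|_{L^1(m)}$ for all $\eta$, hence $\limsup_\eta \|f\|_{L^1(m_\eta)} \le \|f\|_{L^1(m)}$. For the reverse inequality, given $\varepsilon > 0$ I would use $\|f\|_{L^1(m)} = \sup_{x^* \in B_{X^*}} \int |f| \, d|\langle m, x^*\rangle|$ together with the fact that $|\varphi^m_{x^*}|$ is the $\mu$-density of $|\langle m, x^*\rangle|$ to choose $x^* \in B_{X^*}$ with $\int |f|\,|\varphi^m_{x^*}|\,d\mu > \|f\|_{L^1(m)} - \varepsilon$, and then set $g := |f|\,h_B$ where $B := \{\varphi^m_{x^*} \ge 0\}$, so that $g\,\varphi^m_{x^*} = |f|\,|\varphi^m_{x^*}|$. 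Since $|g| = |f| \in L^1(m)$ and $L^1(m)$ is a Banach function space, $g \in L^1(m)$, and the computation in the proof of Lemma~\ref{defphi} gives $\int g\,\varphi^m_{x^*}\,d\mu = \int |f|\,|\varphi^m_{x^*}|\,d\mu > \|f\|_{L^1(m)} - \varepsilon$.

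Now I would invoke (ii) with this fixed $x^*$. The densities $\varphi^{m_\eta}_{x^*}$ all lie in $(L^1(m))'$ — indeed $\int |h|\,|\varphi^{m_\eta}_{x^*}|\,d\mu = \int |h|\, d|\langle m_\eta, x^*\rangle| \le \|h\|_{L^1(m_\eta)} \le \|h\|_{L^1(m)}$ for every $h \in L^1(m)$ — and by hypothesis they converge to $\varphi^m_{x^*}$ in $\sigma\bigl((L^1(m))', L^1(m)\bigr)$. Testing against $g \in L^1(m)$ gives $\int g\,\varphi^{m_\eta}_{x^*}\,d\mu \to \int g\,\varphi^m_{x^*}\,d\mu$, so $\int g\,\varphi^{m_\eta}_{x^*}\,d\mu > \|f\|_{L^1(m)} - \varepsilon$ for all $\eta$ large enough; on the other hand, since $|g| = |f|$ and $\|x^*\| \le 1$,
$$
\int g\,\varphi^{m_\eta}_{x^*}\,d\mu \le \int |f|\,|\varphi^{m_\eta}_{x^*}|\,d\mu = \int |f|\, d|\langle m_\eta, x^*\rangle| \le \|f\|_{L^1(m_\eta)} .
$$
Combining, $\|f\|_{L^1(m_\eta)} > \|f\|_{L^1(m)} - \varepsilon$ eventually, so $\liminf_\eta \|f\|_{L^1(m_\eta)} \ge \|f\|_{L^1(m)}$ once $\varepsilon \downarrow 0$, and together with the $\limsup$ bound this gives $\lim_\eta \|f\|_{L^1(m_\eta)} = \|f\|_{L^1(m)}$. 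The point I expect to be the crux is exactly the one used here: weak* convergence of the densities $\varphi^{m_\eta}_{x^*}$ does not descend to their absolute values, so one cannot simply interchange $\sup_{x^*}$ and $\lim_\eta$ inside $\int |f|\,|\varphi^{m_\eta}_{x^*}|\,d\mu$; replacing $f$ by $g = |f|\,h_B$ for a near-optimal \emph{fixed} $x^*$ linearizes the relevant quantity so that the weak* hypothesis applies directly, while the norm-one inclusion handles the opposite inequality for free.
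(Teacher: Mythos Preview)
Your proof is correct and follows essentially the same approach as the paper: case~(i) is a direct application of Lemma~\ref{ap1}, and for case~(ii) both you and the paper pick a near-optimal $x^* \in B_{X^*}$, linearize the norm expression via a function of the form $f h_A$ (or $|f|h_B$) so that the weak* hypothesis applies directly to a fixed test function, and use the norm-one inclusion for the reverse inequality. The only cosmetic difference is that you construct the set $B=\{\varphi^m_{x^*}\ge 0\}$ explicitly from the sign of the density, whereas the paper appeals to Lemma~\ref{norm} to obtain a suitable $A\in\Sigma$.
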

\begin{proof}
Assume first that
(i) holds. Then the result  is a consequence of Lemma \ref{ap1}. Using the inequality given there and taking into account that each $m$-integrable function is
$m_\eta$-integrable, we get that
for a fixed $f$ in $L^1(m)$,
$$
\big| \big\| f\|_{L^1(m)} - \|f\|_{L^1(m_\eta)} \big| \le \sup_{A \in
  \Sigma} \Bigl\| \int f h_A \,d m - \int f h_A \,d m_\eta \Bigr\|_X 
\to_\eta 0.
$$

 Suppose now that (ii) holds.
Fix $f \in L^1(m)$ and $\varepsilon >0$. Taking into account Lemma~\ref{defphi} we know that
 there are $x^* \in B_{X^*}$ and $A \in \Sigma$ such that
$$
\int f h_A \,d \langle m, x^* \rangle \ge \|f\|_{L^1(m)} - \varepsilon. 
$$
Then, using the pointwise
  limit condition, we find
$$
\lim_\eta \varphi^{m_\eta}_{x^*}(f h_A) =\varphi^m_{x^*}(f h_A)=\int f h_A \,d \langle m, x^* \rangle \ge \|f\|_{L^1(m)} - \varepsilon.
$$
Since for all $\eta$ we have that  $\int f h_A \,d \langle m_\eta, x^* \rangle  \le \|f\|_{L^1(m_\eta)}$, we obtain that
there is $\eta_0 \in \Lambda$ such that for all $\eta \ge \eta_0$,
$$
\|f\|_{L^1(m_\eta)} + \varepsilon \ge \|f\|_{L^1(m)} - \varepsilon.
$$
On the other hand, $\|f\|_{L^1(m)} \ge \|f\|_{L^1(m_\eta)}$, and since this happens for each $\varepsilon >0$ we obtain
\[
\|f\|_{L^1(m)} = \lim_\eta \|f\|_{L^1(m_\eta)}. 
\qedhere
\]
\end{proof}

\subsection{Approximation of weakly compact integration maps}

The arguments in the previous sections lead to the main result of this section. We want to know if it is possible to approximate the integration map by means of
finite rank operators. In fact, we want to know how well an integration map $I_m:L^1(m) \to X$ can be approximated
using the natural associated finite rank operators, which  are the finite rank Radon-Nikod\'ym derivatives of $m$ to be defined later  on.
Let us start with an easy case -- vector measures with values in a
Banach space with a Schauder basis -- in order to show
the arguments that we use.

\begin{example}
Let us consider now the approximation
 of integration operators of vector measures with values in a Banach
 space with a Schauder  basis.
Let us show that in this case, nothing is required for obtaining a
 pointwise approximation of the norm of the space $L^1(m)$. Let us
 explain this case, which allows a 
 specific treatment using some ad hoc constructed tools. 

Let $X=\ell$ be a Banach space with a normalized monotone Schauder
basis $\{e_i\}_{i=1}^\infty$; hence we assume that the basic constant is~$1$.
We can define the sequence of biorthogonal functionals
$\{e^*_i\}_{i=1}^\infty$ in $\ell^*$ as usual: for each $i,j \in
\mathbb N$, $e_i^*(e_j)= \delta_{i,j}$, Kronecker's delta of $i$
and $j$. For a fixed natural number $n$, write $P_n$ for the basis
projection on the $n$-dimensional space generated by the first $n$
vectors, that is 
$$
P_n(x):= \sum_{i=1}^n \langle x, e^*_i \rangle \, e_i \in \ell, \quad
x \in \ell; 
$$
we have  $\|P_n\| \le 1$ by assumption.
Consider a countably additive vector measure $m:\Sigma \to \ell$ and construct the finite
 dimensional components $m_n$ of the measure by
$$
m_n(A):= (P_n \circ m)(A) = \sum_{i=1}^n \langle m(A), e^*_i \rangle
\, e_i \in \ell, \quad A \in \Sigma, \  n \in \mathbb N. 
$$
This clearly provides a sequence of countably additive vector measures.
Take a function $f \in L^1(m)$. Then we have that for each fixed $A
\in \Sigma$ 
$$
 \Bigl\| \int f h_A \,d m \Bigr\|_\ell 
= \lim_n \Bigl\| P_n\Bigl(\int f h_A \,dm\Bigr) \Bigr\|_\ell 
= \lim_n \Bigl\|  \int f h_A \,dm_n \Bigr\|_\ell.
$$
Thus,
$$
\|f\|_{L^1(m)}  
= \sup_{A \in \Sigma} \Bigl\| \int f h_A \,d m \Bigr\|_\ell 
= \sup_{A \in \Sigma} \Bigl( \lim_n \Bigl\| \int f h_A \,dm_n \Bigr\|_\ell \Bigr).
$$
On the other hand, using that $\|P_n\| \le 1$, we obtain
$$
\sup_{A \in \Sigma}\Bigl\| \int f h_A \,d m \Bigr\|_\ell 
\ge \sup_{A \in \Sigma}
\Bigl\| P_n\Bigl(\int f h_A \,d m\Bigr) \Bigr\|_\ell 
=  \sup_{A \in \Sigma} \Bigl\| \int f h_A \,d m_n \Bigr\|_\ell
$$
for every $n \in \mathbb N$, and so
$$
\|f\|_{L^1(m)}  
= \sup_{A \in \Sigma} \Bigl(
\lim_n \Bigl\|  \int f h_A \,dm_n \Bigr\|_\ell \Bigr) 
\le \lim_n \Big( \sup_{A \in \Sigma} \Bigl\|  \int f h_A \,dm_n)
\Bigr\|_\ell \Bigr) \le \|f\|_{L^1(m)}. 
$$
This proves the result.

\bigskip

The same arguments prove the following general result.

\begin{proposition}
Let $m$ be a vector measure.
Consider a net $(P_\eta)$ of operators $P_\eta:X \to X$ that converges pointwise to the identity map
in $X$, and such that for all~$\eta$, $\|P_\eta\| \le 1$. Consider the vector measures
 $m_\eta:= P_\eta \circ m$. Then
  $L^1(m) \Subset L^1(m_\eta)$ for all $\eta$ and for all $f \in L^1(m)$,
$$
\|f\|_{L^1(m)}= \lim_\eta \|f\|_{L^1(m_\eta)}.
$$
\end{proposition}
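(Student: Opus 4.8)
The plan is to follow the blueprint of the preceding example, with the basis projections $P_n$ replaced by the operators $P_\eta$, and to close the argument by means of Lemma~\ref{norm}.

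First I would record the structural facts about $m_\eta=P_\eta\circ m$. Since $P_\eta$ is a bounded linear operator, $m_\eta$ is again countably additive, and scalarising gives $\langle m_\eta,x^*\rangle=\langle m,P_\eta^* x^*\rangle$ for every $x^*\in X^*$; because $\|P_\eta^* x^*\|\le\|x^*\|$, this yields $\|m_\eta\|(A)\le\|m\|(A)$ for all $A\in\Sigma$, so $m_\eta$ is absolutely continuous with respect to any Rybakov measure $\mu$ of $m$ and $L^1(m_\eta)$ is a Banach function space over $\mu$. Next, for $f\in L^1(m)$ and any $x^*$, the function $f$ is integrable with respect to $\langle m_\eta,x^*\rangle=\langle m,P_\eta^* x^*\rangle$, and for $A\in\Sigma$ the element $P_\eta\bigl(\int_A f\,dm\bigr)\in X$ satisfies $\langle P_\eta(\int_A f\,dm),x^*\rangle=\langle\int_A f\,dm,P_\eta^* x^*\rangle=\int_A f\,d\langle m_\eta,x^*\rangle$; hence $f\in L^1(m_\eta)$ with $\int_A f\,dm_\eta=P_\eta\bigl(\int_A f\,dm\bigr)$. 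Combining $\|P_\eta\|\le1$ with the identity $\|f\|_{L^1(m_\eta)}=\sup_{x^*\in B_{X^*}}\int|f|\,d|\langle m,P_\eta^* x^*\rangle|$ gives $\|f\|_{L^1(m_\eta)}\le\|f\|_{L^1(m)}$, which is exactly $L^1(m)\Subset L^1(m_\eta)$ with inclusion norm at most one.

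For the norm limit I would apply Lemma~\ref{norm} to both $m$ and $m_\eta$: using $\int f h_A\,dm_\eta=P_\eta\bigl(\int f h_A\,dm\bigr)$ one gets
$$
\|f\|_{L^1(m_\eta)}=\sup_{A\in\Sigma}\Bigl\|P_\eta\Bigl(\int f h_A\,dm\Bigr)\Bigr\|_X .
$$
Since $\|P_\eta\|\le1$, every term is $\le\bigl\|\int f h_A\,dm\bigr\|_X\le\|f\|_{L^1(m)}$, so $\limsup_\eta\|f\|_{L^1(m_\eta)}\le\|f\|_{L^1(m)}$. Conversely, for each fixed $A\in\Sigma$ pointwise convergence $P_\eta\to\mathrm{Id}$ together with continuity of the norm gives $\bigl\|P_\eta(\int f h_A\,dm)\bigr\|_X\to\bigl\|\int f h_A\,dm\bigr\|_X$, whence $\liminf_\eta\|f\|_{L^1(m_\eta)}\ge\bigl\|\int f h_A\,dm\bigr\|_X$; taking the supremum over $A$ and invoking Lemma~\ref{norm} once more yields $\liminf_\eta\|f\|_{L^1(m_\eta)}\ge\|f\|_{L^1(m)}$, and the two inequalities give $\lim_\eta\|f\|_{L^1(m_\eta)}=\|f\|_{L^1(m)}$. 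Alternatively, once $\int_A f\,dm_\eta=P_\eta\int_A f\,dm$ is known, one may check hypothesis (ii) of Theorem~\ref{thdos}: $\varphi^{m_\eta}_{x^*}=I_m^*(P_\eta^* x^*)$, and since $P_\eta^* x^*\to x^*$ in the weak$^*$ topology of $X^*$ and the adjoint $I_m^*$ is weak$^*$-to-weak$^*$ continuous, $\varphi^{m_\eta}_{x^*}\to\varphi^m_{x^*}$ weak$^*$ in $(L^1(m))'$, so Theorem~\ref{thdos} applies directly.

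The only genuinely delicate point is that $P_\eta\to\mathrm{Id}$ only pointwise, so one cannot expect $\sup_{A}\bigl\|\int f h_A\,dm-\int f h_A\,dm_\eta\bigr\|_X=\sup_A\bigl\|(\mathrm{Id}-P_\eta)(\int f h_A\,dm)\bigr\|_X\to0$: the range $\{\int f h_A\,dm:A\in\Sigma\}$ is only relatively weakly compact, not norm compact, in general. Thus hypothesis (i) of Theorem~\ref{thdos} is not the right tool here; instead the supremum over $A$ must be kept on the outside, exactly as in the example, and the interchange of $\sup_A$ with $\lim_\eta$ is carried out through the two one-sided estimates above.
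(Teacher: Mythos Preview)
Your proof is correct and follows precisely the route the paper intends: the paper itself offers no separate argument for the proposition, merely stating that ``the same arguments prove the following general result,'' referring to the Schauder basis example; your main argument (the two one-sided estimates via Lemma~\ref{norm}, the contraction $\|P_\eta\|\le1$, and pointwise convergence at each fixed $A$) is exactly that computation written out in full, with the structural verification of $L^1(m)\Subset L^1(m_\eta)$ that the paper leaves implicit. Your alternative via hypothesis~(ii) of Theorem~\ref{thdos} and your closing remark that hypothesis~(i) need not hold here are both correct and add useful perspective beyond what the paper records.
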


\end{example}

The natural finite rank maps in the setting of the vector measures are
the ones that can be written as 
finite sums of products of vectors on $X$ by Radon-Nikod\'ym
derivatives of the scalarizations of the vector measure, 
that is, finite rank operators as 
$$
R_m(\cdot)= \sum_{i=1}^n \varphi^m_{x_i^*}(\cdot) \, x_i =
\sum_{i=1}^n \frac{d \langle m, x_i^* \rangle }{d \mu} (\cdot) \, x_i
. 
$$
We will call this kind of operator a finite rank Radon-Nikod\'ym
derivative operator of $m$. 

\begin{theorem} \label{prin}
Let $m:\Sigma \to X$ a countably additive vector measure such that
$I_m$ is weakly compact, where $X$ has the 
approximation property. Then there is a net of finite rank
Radon-Nikod\'ym derivative operators $(R^\alpha_m)_\alpha$ 
of norm $\le1$
that converges to $I_m$ in the strong operator topology. That is, for
every $f \in L^1(m)$ 
$$
\lim_\alpha R^\alpha_m(f)= \int f \, d m = I_m(f).
$$
\end{theorem}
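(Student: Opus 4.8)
The plan is to reduce the assertion to a norm-controlled pointwise approximation of the identity on the weakly compact ``effective range'' of $I_m$, and then to transport it back to $L^1(m)$ by a composition. The key elementary remark is that the finite rank Radon-Nikod\'ym derivative operators of $m$ are exactly the operators of the form $Q\circ I_m$ with $Q$ a finite rank operator on $X$: if $Q=\sum_{i=1}^n x_i^*\otimes x_i$, then for every $f\in L^1(m)$
\[
(Q\circ I_m)(f)=Q\Bigl(\int f\,dm\Bigr)=\sum_{i=1}^n\Bigl\langle\int f\,dm,x_i^*\Bigr\rangle\,x_i=\sum_{i=1}^n\varphi^m_{x_i^*}(f)\,x_i
\]
by Lemma~\ref{defphi}, and conversely every finite rank Radon-Nikod\'ym derivative operator arises in this way. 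Hence it suffices to produce a net $(Q_\alpha)$ of finite rank operators on $X$ such that $Q_\alpha\circ I_m\to I_m$ in the strong operator topology and $\|Q_\alpha\circ I_m\|\le 1$; then $R^\alpha_m:=Q_\alpha\circ I_m$ is the required net.

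To locate such a net I would put $W:=\overline{I_m(B_{L^1(m)})}$ and pass to its closed absolutely convex hull, which is again weakly compact by Krein's theorem and is contained in $B_X$ since $\|I_m\|=1$. Because $I_m(f)\in\|f\|_{L^1(m)}\,W$ for every $f$, the two requirements on $(Q_\alpha)$ unwind to: (a) $Q_\alpha w\to w$ for all $w\in W$, and (b) $\sup_{w\in W}\|Q_\alpha w\|\le 1$. Requirement (a) on its own is immediate from the approximation property of $X$: any net of finite rank operators on $X$ tending to the identity uniformly on norm-compact sets already yields $Q_\alpha\circ I_m\to I_m$ pointwise. The real task is to secure (a) and (b) simultaneously, and the bare approximation property gives no control of norms --- this is the difference between AP and the metric AP --- and it is precisely here that weak compactness of $I_m$ enters. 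One uses the factorization of a weakly compact operator through a reflexive space (Davis--Figiel--Johnson--Pe\l czy\'nski), $I_m=j\circ A$ with $Z$ reflexive, $A\colon L^1(m)\to Z$, $j\colon Z\to X$, the norms normalized so that $\|j\|\le 1$ and $W\subseteq j(B_Z)$ (modulo a routine $\varepsilon$ that is absorbed at the end); then $\sup_{w\in W}\|Q_\alpha w\|\le\|Q_\alpha\circ j\|$, so (b) reduces to $\|Q_\alpha\circ j\|\le\|j\|$, while (a) reduces to $Q_\alpha\circ j\to j$ in the strong operator topology. Producing finite rank operators on $X$ that realize this norm-controlled pointwise approximation of the weakly compact operator $j$ is exactly what the circle of ideas of the approximation property, in the quantitative form developed in \cite{LiOja2004}, supplies.

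Finally I would assemble the pieces: with a net $(Q_\alpha)$ as above, write $Q_\alpha=\sum_i x_{i,\alpha}^*\otimes x_{i,\alpha}$ and set $R^\alpha_m:=Q_\alpha\circ I_m=\sum_i\varphi^m_{x_{i,\alpha}^*}(\cdot)\,x_{i,\alpha}$, which is a finite rank Radon-Nikod\'ym derivative operator of $m$ by the opening remark; then $\|R^\alpha_m\|=\sup_{f\in B_{L^1(m)}}\|Q_\alpha(I_m f)\|\le\sup_{w\in W}\|Q_\alpha w\|\le 1$, and $R^\alpha_m(f)=Q_\alpha(I_m f)\to I_m f$ for every $f\in L^1(m)$, since $I_m f\in\|f\|_{L^1(m)}\,W$ and $Q_\alpha w\to w$ for every $w\in W$. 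The step I expect to be the main obstacle is (b), the norm-one bound: it is what forces us through the reflexive factorization and the quantitative approximation-property results rather than the mere definition of the approximation property, and it is also the point where the precise form of the result being invoked matters; everything else is bookkeeping with Lemma~\ref{defphi} and with weak compactness.
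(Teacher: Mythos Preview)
Your overall architecture is the same as the paper's: factor $I_m$ through a reflexive space via Davis--Figiel--Johnson--Pe\l czy\'nski, then use approximation-property machinery to get a norm-controlled net of finite rank operators approximating the inclusion, and finally read off that the composites are finite rank Radon--Nikod\'ym derivative operators. Your opening observation that these are exactly the operators $Q\circ I_m$ with $Q$ finite rank on $X$ is a clean way to phrase the target.

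One point needs sharpening. You hand off the crucial step~(b) to \cite{LiOja2004}, but the result quoted from that paper in the introduction is a \emph{characterization of when $X^*$ has the approximation property}; under the present hypothesis only $X$ has it, so you cannot invoke that statement as is. The paper instead appeals to Theorem~1.2 of \cite{LiNyOja2000}, which under the AP of $X$ produces a net of finite rank operators $A_\alpha\colon X_K\to X$ with $\|A_\alpha\|\le 1$ converging pointwise to the inclusion $J_K$, where $X_K$ is the DFJP reflexive space. These $A_\alpha$ live on $X_K$, not on $X$, so a priori $A_\alpha=\sum z_i^*\otimes x_i$ with $z_i^*\in X_K^*$, and the composite $A_\alpha\circ I_m^0$ is not yet of the form $\sum\varphi^m_{x_i^*}(\cdot)\,x_i$ with $x_i^*\in X^*$. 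The extra step --- which your proposal absorbs into the black box --- is that $J_K^*$ has dense range in $X_K^*$ because $X_K$ is reflexive and $J_K$ is injective; hence one may perturb each $z_i^*$ to some $J_K^*x_i^*$ while keeping $\|A_\alpha\|\le 1$ and pointwise convergence, and only then does $A_\alpha\circ I_m^0$ become $Q_\alpha\circ I_m$ with $Q_\alpha=\sum x_i^*\otimes x_i$ on $X$. With this adjustment your proof goes through and matches the paper's.
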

\begin{proof}
We use Theorem~1.2 in \cite{LiNyOja2000}, in which the famous
Davis-Figiel-Johnson-Pe{\l}cz\'ynski factorization technique for
weakly compact operators is used.
Suppose that $X$ has the approximation property. Assume that
$\|I_m\|=1$. Lemma 1.1 in \cite{LiNyOja2000} gives that there is a
reflexive Banach space $X_K$ that is embedded in $X$ with inclusion
map $J_K:X_K \to X$ in such a way that $I_m(B_{L^1(m)}) \subseteq
B_{X_K}$. That is, there is a factorization of $T$ as $J_K \circ
I^0_m$ through $X_K$,  where $I_m^0: L^1(m) \to X_K$ is the
integration map when restricted in the range to $X_K$, and $\|J_K\|
\le 1$. In the proof of Theorem~1.2 of \cite{LiNyOja2000} it can be seen
that there exists a net $(A_\alpha)$ of finite rank operators from
$X_K$ to $X$ such that $\sup_\alpha \|A_\alpha\| \le 1$, with
$A_\alpha(I^0_m(f)) \to J_K(I^0_m(f))$ for all $f \in L^1(m)$. Define
$R^\alpha_m(f):= A_\alpha \circ I_m^0(f)$. 
Each $A_\alpha$ has the form
$A_\alpha= \sum_{i=1}^n z^*_i \otimes x_i$. 
Since $X_K$ is reflexive and $J_K$ is injective (cf.\ 
Lemma~1.1 in \cite{LiNyOja2000}) we have that $J_K^*$ has dense range.
Hence we may assume that $z_i^* = J_K^* x_i^*$, and we still have 
$\sup_\alpha \|A_\alpha\| \le 1$ and
$A_\alpha(I^0_m(f)) \to J_K(I^0_m(f))$ for all $f \in L^1(m)$. Now we
see that 
$$
A_\alpha(I_m^0(f))
= \sum_{i=1}^n x_i \Bigl\langle z^*_i, \int f\, dm \Bigr\rangle 
= \sum_{i=1}^n x_i \Bigl\langle x^*_i, \int f\, dm \Bigr\rangle 
= \sum_{i=1}^n \Bigl\langle f, \frac{d \langle m, x_i^* \rangle }{d
  \mu} \Big\rangle \, x_i; 
$$
in the first instance, $\int f\,dm$ is taken in $X_K$ and in the
second in~$X$.

This gives the result.
\end{proof}

The following  is the main positive result regarding
 approximation of the norm of a space $L^1(m)$ by means of the finite
 dimensional components of the  integration map $I_m$. 
Recall that, for an operator $T:L^1(m) \to X$, the associated vector
 measure is given by $m_T(A)=T(\chi_A)$, $A \in \Sigma$.

\begin{corollary} \label{corprin}
Let $m:\Sigma \to X$ be a countably additive vector measure such that
$I_m$ is weakly compact, where $X$ has the 
approximation property. Then there is a net of finite rank
Radon-Nikod\'ym derivative operators $(R^\alpha_m)_\alpha$ such that 
$$
\lim_\alpha \|f\|_{L^1(m_\alpha)}= \|f\|_{L^1(m)}, \quad f \in L^1(m),
$$
where $m_\alpha$ is the vector measure associated to $R^\alpha_m$.
\end{corollary}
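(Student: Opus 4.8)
The plan is to feed the net produced by Theorem~\ref{prin} directly into Theorem~\ref{thdos}. By Theorem~\ref{prin}, the hypotheses (weak compactness of $I_m$, approximation property of $X$) furnish a net $(R^\alpha_m)_\alpha$ of finite rank Radon-Nikod\'ym derivative operators, say $R^\alpha_m(\cdot)=\sum_{i=1}^{n_\alpha}\varphi^m_{x_i^*}(\cdot)\,x_i$ with $\|R^\alpha_m\|\le1$, such that $R^\alpha_m(f)\to I_m(f)$ in $X$ for every $f\in L^1(m)$. First I would let $m_\alpha$ be the vector measure associated with $R^\alpha_m$, i.e.\ $m_\alpha(A)=R^\alpha_m(\chi_A)=\sum_i\langle m(A),x_i^*\rangle\,x_i$; this is countably additive and $m_\alpha\ll\mu$, and its Radon-Nikod\'ym derivative with respect to $\mu$ is $\varphi^{m_\alpha}_{x^*}=\sum_i\langle x_i,x^*\rangle\,\varphi^m_{x_i^*}$, a finite linear combination of members of $(L^1(m))'$.

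Next I would check the structural hypothesis of Theorem~\ref{thdos}, namely that $L^1(m)\subseteq\bigcap_\alpha L^1(m_\alpha)$ with the inclusion into each $L^1(m_\alpha)$ of norm one. Since $\varphi^{m_\alpha}_{x^*}\in(L^1(m))'$ for every $x^*$, every $f\in L^1(m)$ is integrable with respect to each scalar measure $\langle m_\alpha,x^*\rangle$; and the finite rank formula gives $\int_A f\,dm_\alpha=\sum_i\varphi^m_{x_i^*}(f\chi_A)\,x_i=R^\alpha_m(f\chi_A)\in X$, so $f\in L^1(m_\alpha)$ and $I_{m_\alpha}$ agrees with $R^\alpha_m$ on $L^1(m)$. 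Then Lemma~\ref{norm} applied to $m_\alpha$, combined with $\|R^\alpha_m\|\le1$ and $|fh_A|=|f|$, yields
$$
\|f\|_{L^1(m_\alpha)}=\sup_{A\in\Sigma}\Bigl\|\int fh_A\,dm_\alpha\Bigr\|_X=\sup_{A\in\Sigma}\|R^\alpha_m(fh_A)\|_X\le\sup_{A\in\Sigma}\|fh_A\|_{L^1(m)}=\|f\|_{L^1(m)}.
$$

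Finally I would verify condition~(ii) of Theorem~\ref{thdos} (with the index $\alpha$ playing the role of $\eta$): for $f\in L^1(m)$ and $x^*\in X^*$, one has $\varphi^{m_\alpha}_{x^*}(f)=\int f\,d\langle m_\alpha,x^*\rangle=\langle R^\alpha_m(f),x^*\rangle$, which converges to $\langle I_m(f),x^*\rangle=\varphi^m_{x^*}(f)$ because $R^\alpha_m(f)\to I_m(f)$ in norm; this is precisely the statement that $\varphi^{m_\alpha}_{x^*}\to\varphi^m_{x^*}$ in the weak* topology of $(L^1(m))'$ (legitimate because $L^1(m)$ is order continuous, so $(L^1(m))'=(L^1(m))^*$). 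Theorem~\ref{thdos} then delivers $\lim_\alpha\|f\|_{L^1(m_\alpha)}=\|f\|_{L^1(m)}$ for all $f\in L^1(m)$, which is the assertion. Essentially everything here is bookkeeping on top of the two quoted theorems; the only step requiring genuine care is the identification $I_{m_\alpha}|_{L^1(m)}=R^\alpha_m$ and the contractivity of the inclusions $L^1(m)\Subset L^1(m_\alpha)$ that it yields, and I do not expect this to pose a real obstacle.
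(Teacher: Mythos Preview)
Your argument is correct. The ingredients are the same as the paper's---the net from Theorem~\ref{prin} with $\|R^\alpha_m\|\le1$, the identification $I_{m_\alpha}|_{L^1(m)}=R^\alpha_m$, Lemma~\ref{norm} for the contractivity of the inclusions, and the pointwise convergence $R^\alpha_m(f)\to I_m(f)$---but you package the endgame differently: you verify condition~(ii) of Theorem~\ref{thdos} and invoke that theorem, whereas the paper bypasses Theorem~\ref{thdos} and argues directly, choosing $A\in\Sigma$ with $\|\int fh_A\,dm\|_X>\|f\|_{L^1(m)}-\varepsilon$ and using the \emph{norm} convergence $\|R^\alpha_m(fh_A)\|_X\to\|\int fh_A\,dm\|_X$ (which is essentially the proof of Theorem~\ref{thdos}(ii) inlined, with norm convergence in place of weak convergence). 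Your route is more modular and exploits the machinery already set up; the paper's is self-contained within the corollary. Neither buys anything the other lacks.
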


\begin{proof}
Consider the net of finite rank operators $(R^\alpha_m)_\alpha$ of
 norm $\le1$  that Theorem~\ref{prin} provides. Note that all the
 measures $m_\alpha$ are countably additive, since $R^\alpha_m$ are 
 finite rank operators and $L^1(m)$ is order continuous.
Take a function $f \in L^1(m)$ and $\varepsilon >0$. Then there is $A
 \in \Sigma$ such that 
$$
\|f\|_{L^1(m)} - \varepsilon 
< \Bigl\| \int f h_A \, d m \Bigr\|_X 
= \lim_\alpha \Bigl\| \int f h_A \, d m_\alpha \Bigr\|_X 
\le \sup_{A \in \Sigma} \Bigl(  \lim_\alpha 
\Bigl\| \int f h_A  \, d m_\alpha\Bigr\|_X  \Bigr).
$$
Now, since $\| R^m_\alpha \| \le 1$, we obtain
$$
\sup_{A \in \Sigma} \Bigl\| \int f h_A \, d m \Bigr\|_X 
\ge \sup_{A \in \Sigma} \| R^m_\alpha ( f h_A) \|_X
=  \sup_{A \in \Sigma} \Bigl\| \int f h_A \, d m_\alpha \Bigr\|_X 
= \|f\|_{L^1(m_\alpha)}
$$
for every $\alpha$, which  gives
\begin{align*}
\|f\|_{L^1(m)} - \varepsilon 
&< \sup_{A \in \Sigma} \Bigl(  \lim_\alpha \Bigl\| \int f h_A \, d
m_\alpha \Bigr\|_X  \Bigr) \\
& \le 
\lim_\alpha  \Bigl( \sup_{A \in \Sigma} \Bigl\| \int f h_A \, d m_\alpha
\Bigr\|_X  \Bigr) \\
&\le
\lim_\alpha \|f\|_{L^1(m_\alpha)} \le \|f\|_{L^1(m)}.
\end{align*}
This gives  the proof.
\end{proof}

This clarifies the situation in a great class of Banach space valued
vector measures. For example, if $X$ is reflexive we obtain the result
directly: each integration map can be approximated by  a net of
finite rank operators -- that in fact defines a net of vector measures
$(m_\alpha)_\alpha$ --, and its norm can be computed as the limit of
the norms in the associated $L^1(m_\eta)$-spaces.

\subsection{Approximation of the integration map by martingale type
  constructions} \label{martingales} 

In the previous section we have shown that weakly compact integration
operators in spaces with the approximation property allow
approximations by finite rank Radon-Nikod\'ym derivative operators. 
However, this result still excludes the canonical example of
integration map: the identity map
in $L^1[0,1]$. 
The method of approximation that can be used in this case is based on
martingale type constructions. Consider a vector measure $m:\Sigma \to
X$ 
and the space of integrable functions $L^1(m)$.
Let $\mathbb P:=\{ \mathcal P_\eta: \eta \in \Lambda\}$ be the net of
(classes of $\mu$-a.e.\  equal) finite measurable partitions of
$\Omega$ endowed with the usual inclusion order. 
Consider the net of vector measures 
$$
m_\eta(A):= \sum_{B \in \mathcal
  P_\eta} \frac{\mu(A \cap B)}{\mu(B)} m( B) \in X,
$$ 
where $\mathcal P_\eta \in \mathbb P$.

Consider the corresponding integration map $I_{m_\eta}:L^1(m_\eta) \to
X$. It is easy to see that this is given by the formula 
$$
I_{m_\eta}(f):= \sum_{B \in \mathcal P_\eta} \frac{\int_B f \, d
  \mu}{\mu(B)} m(B) \in X, \quad f \in L^1(m_\eta). 
$$
A direct computation shows also that $L^1(m) \Subset L^1(m_\eta)$,
that is, $I_{m_\eta}$ is well-defined for all functions in
$L^1(m)$. The question then is: When can the integration map $I_m$ be 
approximated pointwise 
by the family $\{I_{m_\eta}: \eta \in \Lambda \}$? In other words, is
it true that for every $f \in L^1(m)$, 
$$
\lim_\eta I_{m_\eta}(f)= I_m(f) ?
$$
We will show in what follows that in general the answer is 
negative, although it is true for $L^p[0,1]$, $p \ge 1$. 

\begin{example}
Let us check again our canonical example: the identity map in
$L^1[0,1]$ considered as an integration map for  the vector measure
$m(A):=\chi_A$, $A \in \Sigma$. 
Fix a function $f \in L^1(m)$. Consider the net $\mathbb P$ and fix $\varepsilon >0$. Then there is a simple function $f_\varepsilon= \sum_{i=1}^n \lambda_i \chi_{B_i}$
such that $\|f-f_\varepsilon\|_{L^1(m)} \le \varepsilon.$
It is assumed, by putting $\lambda_n=0$ if necessary, that  $\mathcal
P_{\eta_0}=\{B_1,\dots ,B_n\}$ is a partition. Note that
$I_{m_\eta}(f_\varepsilon)=I_{m_{\eta_0}}(f_\varepsilon)=f_\varepsilon$ for every $\eta \ge \eta_0$. Then
\begin{align*}
\big\|I_m(f)-I_{m_\eta}(f) \big\|_{L^1[0,1]} 
&\le 
\big\|I_m(f)-I_{m_\eta}(f_\varepsilon) \big\|_{L^1[0,1]} + \big\|
I_{m_\eta}(f_\varepsilon)-I_{m_\eta}(f) \big\|_{L^1[0,1]} \\
&\le
\varepsilon + \Big\| f_\varepsilon - \sum_{i=1}^n \frac{\int_{B_i} f
  \, d \mu}{\mu(B_i)} \chi_{B_i} \Big\|_{L^1[0,1]} \\
&=
\varepsilon + \Big\| \sum_{i=1}^n  \Bigl( \lambda_i - \frac{\int_{B_i}
  f \, d \mu}{\mu(B_i)} \Bigr) \chi_{B_i} \Big\|_{L^1[0,1]} \\
&=
\varepsilon +  \int    \sum_{i=1}^n \Big| \lambda_i - \frac{\int_{B_i}
  f \, d \mu}{\mu(B_i)} \Big| \chi_{B_i} \, d \mu \\
&= 
\varepsilon + \sum_{i=1}^n  \Big| \lambda_i \mu(B_i) - \int_{B_i} f \,
d \mu\Big| \\
&= 
\varepsilon + \sum_{i=1}^n  \Big| \int_{B_i} f_\varepsilon \, d \mu -
\int_{B_i} f \, d \mu \Big| \\
&\le 
\varepsilon + \sum_{i=1}^n   \int_{B_i} \big| f_\varepsilon  -  f
\big| \, d \mu \\
&= \varepsilon + \|f_\varepsilon -f\|_{L^1[0,1]} \le  2 \varepsilon.
\end{align*} 

Therefore, we have that $\lim_\eta I_{m_\eta}= I_m$
pointwise. Moreover, notice that for each $A \in \Sigma$, if we replace
$f$ and $f_\varepsilon$ by $f h_A$ and $f_\varepsilon h_A$
respectively in the computations above, we can also prove that for
each $f \in L^1(m)$, 
$$
\lim_\eta \sup_{A \in \Sigma} \Bigl\| \int f h_A \,dm - \int f h_A \,d
m_\eta\Bigr\|=0. 
$$
Thus, by Theorem \ref{thdos}(i) we obtain $\lim_\eta \|f\|_{L^1(m_\eta)}= \|f\|_{L^1(m)}$ for each $f \in L^1(m)$.
\end{example}

In general, the convergence of martingales is not assured in Banach
function spaces. It is well known  that in the case of the spaces
$L^p[0,1]$, $ 1 < p < \infty$, this is true, as a consequence of Doob's
martingale inequality; actually, this fact can be extended to the case
of Bochner spaces $L^p(\mu,X)$ over  probability non-atomic measures
$\mu$ (the reader can find a proof  in Theorem~1.5 and Remark~1.7  in
\cite{pisier}). 
However, the arguments that support these results cannot be
transferred to the whole class of  Banach function spaces. Consider an
order continuous Banach function space $X(\mu)$ over a probability
non-atomic measure $\mu$, and define the same vector measure $\Sigma
\ni A \mapsto \chi_A \in X(\mu)$ that has been considered in
the example above. Note that in this case $L^1(m)=X(\mu)$
isometrically and $I_m=\mathrm{Id}$, the identity map in $X(\mu)$. 
In \cite{kiku}, it is studied to what extent the convergence of
martingales in Banach function spaces resembles the case of  $L^p[0,1]$, and
several positive results are shown for relevant spaces like  $L \log
L$. However, a counterexample is also given for the general fact. It
must be noted that in this paper, the definition of Banach function
space includes the requirement of having the Fatou property, so the
class of Banach function spaces considered there is smaller than the
one we are considering. 

Under this requirement, Theorem~1 in \cite{kiku} establishes that for
an order continuous (and Fatou) Banach function space $X(\mu)$, a
uniformly integrable martingale defined by a sequence of continuous
conditional expectation operators $E_n:X(\mu) \to X(\mu)$ when applied
to a function $f \in X(\mu)$ is convergent to $f$ in the norm of
$X(\mu)$ for each $f$, if and only if the sequence $(\|E_n\|)_n$ is
uniformly bounded. However, it is also proved that this requirement is
not always satisfied. Thus, every sequence of conditional expectation
operators is bounded in $L^p$, but in a 
general function space this is not true. Moreover, Theorem~2 in
\cite{kiku} shows that for rearrangement invariant order continuous
and Fatou Banach function spaces the convergence of uniformly
integrable martingales is guaranteed. Again, there are spaces $L^1(m)$
that are not rearrangement invariant, so the result does not apply in
our setting.

\section{The Daugavet equation for integration maps}

In this section we analyse the negative results regarding 
unconditional pointwise
approximation of integration maps. We will show that, even in the
simplest examples, the integration map of a vector measure cannot be
approximated pointwise  as an unconditional series of finite rank
operators, or even of weakly compact operators. 
The main conclusion of this section is the following. \textit{There
  are vector measures with associated integration maps  of absolutely 
  different nature representing
  the same space: they may satisfy that $\|f\|_{L^1(m)}= \lim_{\eta}
  \|f\|_{L^1(m_\eta)}$ for all $f \in L^1(m)$, but $I_{m}$ 
cannot be represented as an unconditionally pointwise convergent
series of weakly compact   operators.} 
The Daugavet property is our main source of examples and
counterexamples, together with the following useful result that can
be found in~\cite[Th.~2.9]{BoKa}.

\bigskip\noindent
\textbf{Theorem~A.} 
\textit{ Let $G \in L(X,Y)$. Suppose that the inequality 
$$
\| G+T\| \ge   C + \|T\|
$$ 
with $C >0$ holds for every operator $T$ from a subspace
  $M \subseteq L(X,Y)$ of operators. Let $\hat{T}= \sum_{n\in \Gamma}
  T_n$ be a (maybe uncountable) pointwise unconditionally convergent
  series of operators $T_n \in M$. Then $\|G - \hat{T}\| \ge C$.} 

\bigskip
Recall that if a Banach space $X$ has the Daugavet property, then we
have that for every weakly compact operator, $\|\mathrm{Id} + T\|=
 1 + \|T\|$,
so the result above applies; this  result can be found in
\cite[Lemma~2.6]{ams2000} for this specific case.  

Let $(\Omega,\Sigma)$ be a measurable space and $\mu$  a positive
measure without atoms. The main examples of Banach function spaces
that have the Daugavet property are $L^1(\mu)$ and
$L^{\infty}(\mu)$. Another example is $C(K)$, if $K$ is a compact
Hausdorff topological space without isolated points. 
In this section we study when the integration operator $I_m$ satisfies
the Daugavet equation for a suitable $L^1(m)$-valued vector measure or
in a more general sense, when $\|I_m + I_n\|= \|I_m\| + \|I_n\|$ for
$m$ and $n$ being vector measures $m,n: \Sigma \to X$ such that
$L^1(m)=L^1(n)$. 

Let us start with the main negative example.  Let $m$ be a non-atomic
vector measure. It is well known that $I_m$ is compact if and only if
$m$ has finite variation and admits a Radon-Nikod\'ym derivative, and
in this case $L^1(m)=L^1(|m|)$ (see \cite[Ch.~3]{libro}). For instance,
if $\mu$ is a Rybakov measure for such a vector measure $m$ satisfying
$L^1(m)=L^1(\mu)$, then the integration map $I_\mu:L^1(m) \to \mathbb
R$ given by $f \mapsto \int f \,d \mu$ produces the same space
of integrable functions. If we fix a norm one function $g \in
L^1(\mu)$, we can consider this integration map as having values in
$L^1(\mu)$ by defining it as $f \mapsto \int f \,d \mu \otimes g.$ 

However, the operator associated to the vector measure $m_0(A):=\chi_A
\in L^1(\mu)$, that is the identity map 
$I_{m_0} = \mathrm{Id} :L^1(m) \to L^1(\mu)$, gives the same space of integrable
functions $L^1(m_0)=L^1(m)=L^1(\mu)$. 
In this case, it is clear that the integration map $I_{m_0}=\mathrm{Id}$ cannot
be approximated in the operator norm by a sequence of compact
operators.

Theorem~A above, together with the Daugavet property of $L^1[0,1]$, gives
that the integration map $I_{m_0}:L^1[0,1] \to L^1[0,1]$ cannot
be approximated by any pointwise unconditional sum of weakly compact
operators. 
However,  if $m_1(A):= \mu(A) \otimes \chi_{[0,1]}$ -- which has an
associated rank 1 integration map --, we have that
$L^1(m_0)=L^1(m_1)=L^1[0,1]$ and for each function $f \in L^1[0,1]$,
$\|f\|_{L^1(m_0)} = \|f\|_{L^1(m_1)}$. 
The results provided in Section~\ref{martingales} show that, although
we can approximate $I_{m_0}$ pointwise by a net of finite rank
operators, the corresponding series that approximates a function cannot
be unconditionally convergent in general.

Let us show another example in this direction.

\begin{example}
The previous example gives some ideas regarding spaces $L^1(m)$ of
$L^1(\mu)$-valued measures. 
Let $(\Omega,\Sigma,\mu)$ be a non-atomic finite measure space. For 
a continuous linear operator $T:L^1(\mu)\to L^1(\mu)$ consider
the  vector measure defined as $m_T(A):=T(\chi_A)$ for each measurable
set $A\in\Sigma$.

Let $T:L^1(\mu)\to L^1(\mu)$ be an isomorphism and let $R:L^1(\mu) \to
L^1(\mu)$ be a Radon-Nikod\'ym operator (for example, a weakly compact
operator). Consider the vector measures $m_T$ and $m_{R \circ
  T}$. Then the corresponding integration operators  $I_{m_T}$ and
$I_{m_{R \circ T}}$ satisfy 
$$
\| I_{m_T} + I_{m_{R \circ T}} \| = \|I_{m_T}\|+ \| I_{m_{R \circ T}}\|
$$
Therefore, for each $\varepsilon >0$ there is a function $f \in L^1(m_T)$ such that
$$
\Bigl\|\int f \,dm_T - \int f \,d m_{R \circ T}\Bigr\| \ge \|I_{m_T}\|+ \|
I_{m_{R \circ T}}\| - \varepsilon. 
$$
In other words, again by Theorem~A we cannot approximate pointwise
unconditionally the integration map associated to $T$ by means of
integration maps associated to vector measures constructed using
operators like $R \circ T$, where $R$ is a Radon-Nikod\'ym operator. 
\end{example}

As in the previous section, in what follows we will describe the
Daugavet equation among integration operators in terms of the
Radon-Nikod\'ym derivative operators of their scalarizations. 
As usual, the restriction of an operator $T:E \to X$ to a subset $Y$
of $E$ is denoted by $T|_Y$. 

\begin{lemma}
Let $X_0,X_1$ be Banach spaces and consider two  vector measures
$m:\Sigma \to X_0$ and $m_1:\Sigma \to X_1$ that are equivalent to a
scalar measure $\mu$. Suppose that $Z=Z(\mu)$ is a Banach function space
such that $Z(\mu) \subseteq L^1(m) \cap L^1(m_1)$ and $X_0 + X_1
\subseteq X$ with continuous inclusions. Then for every scalar
$\lambda$, 
$$
\|I_m|_Z + \lambda I_{m_1}|_Z\|_{L(Z(\mu),X)} = \sup_{x^* \in B_{X^*}}
\big\| \varphi^{m}_{x^*} + \lambda \varphi^{m_1}_{x^*}
\big\|_{Z(\mu)^*}. 
$$
\end{lemma}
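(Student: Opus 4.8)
The plan is to reduce the operator-norm computation to a scalarization, using the Hahn--Banach duality $\|y\|_X=\sup_{x^*\in B_{X^*}}|\langle y,x^*\rangle|$ together with Lemma~\ref{defphi}. First I would observe that, since $Z(\mu)\subseteq L^1(m)\cap L^1(m_1)$, for every $f\in Z(\mu)$ the vectors $I_m(f)\in X_0$ and $I_{m_1}(f)\in X_1$ are defined, and by the continuity of the inclusions $X_0,X_1\subseteq X$ the sum $I_m(f)+\lambda I_{m_1}(f)$ is a well-defined element of $X$ (and $I_m|_Z$, $I_{m_1}|_Z$ are bounded $Z(\mu)\to X$). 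Hence
$$
\|I_m|_Z+\lambda I_{m_1}|_Z\|_{L(Z(\mu),X)}
=\sup_{f\in B_{Z(\mu)}}\bigl\|I_m(f)+\lambda I_{m_1}(f)\bigr\|_X
=\sup_{f\in B_{Z(\mu)}}\ \sup_{x^*\in B_{X^*}}\Bigl|\bigl\langle I_m(f)+\lambda I_{m_1}(f),x^*\bigr\rangle\Bigr|.
$$

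The second step is to expand the inner pairing. Since $m$ and $m_1$ are equivalent to $\mu$ we have $\|m\|\ll\mu$ and $\|m_1\|\ll\mu$, so for each $x^*\in X^*$ the scalar measures $\langle m,x^*\rangle$ and $\langle m_1,x^*\rangle$ are absolutely continuous with respect to $\mu$ (using $|\langle m,x^*\rangle|\le\|x^*\|\,\|m\|$), and the Radon--Nikod\'ym derivatives $\varphi^m_{x^*},\varphi^{m_1}_{x^*}$ exist; here I write $\varphi^m_{x^*}$ for what Lemma~\ref{defphi} would call $\varphi^m_{x^*|_{X_0}}$, and likewise for $m_1$. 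Exactly as in the proof of Lemma~\ref{defphi}, $\langle I_m(f),x^*\rangle=\int f\,\varphi^m_{x^*}\,d\mu$ and $\langle I_{m_1}(f),x^*\rangle=\int f\,\varphi^{m_1}_{x^*}\,d\mu$ for $f\in Z(\mu)$; note $\varphi^m_{x^*}\in(L^1(m))'$ and $\varphi^{m_1}_{x^*}\in(L^1(m_1))'$, so both restrict to continuous functionals on $Z(\mu)$. Therefore
$$
\bigl\langle I_m(f)+\lambda I_{m_1}(f),x^*\bigr\rangle
=\int f\,\bigl(\varphi^m_{x^*}+\lambda\varphi^{m_1}_{x^*}\bigr)\,d\mu
=\bigl\langle f,\varphi^m_{x^*}+\lambda\varphi^{m_1}_{x^*}\bigr\rangle .
$$

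Finally I would combine the two displays: the supremum is over the product $B_{Z(\mu)}\times B_{X^*}$ and so may be taken in either order, and for fixed $x^*$, because $B_{Z(\mu)}$ is symmetric and $\|\cdot\|_{Z(\mu)}$ a lattice norm,
$$
\sup_{f\in B_{Z(\mu)}}\Bigl|\int f\,\bigl(\varphi^m_{x^*}+\lambda\varphi^{m_1}_{x^*}\bigr)\,d\mu\Bigr|
=\bigl\|\varphi^m_{x^*}+\lambda\varphi^{m_1}_{x^*}\bigr\|_{Z(\mu)^*},
$$
the right-hand side being the norm of this K\"othe-dual function viewed as a functional on $Z(\mu)$. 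This yields
$$
\|I_m|_Z+\lambda I_{m_1}|_Z\|_{L(Z(\mu),X)}=\sup_{x^*\in B_{X^*}}\bigl\|\varphi^m_{x^*}+\lambda\varphi^{m_1}_{x^*}\bigr\|_{Z(\mu)^*},
$$
which is the claim. I do not expect a genuine obstacle here: the argument is essentially a scalarization of the operator norm, and the only points requiring care are the bookkeeping that $\langle m,x^*\rangle\ll\mu$ (so that the derivatives exist and behave as in Lemma~\ref{defphi}, even though $\mu$ need not be a Rybakov measure for $m$) and the routine identification of $\sup_{f\in B_{Z(\mu)}}|\int fg\,d\mu|$ with $\|g\|_{Z(\mu)^*}$.
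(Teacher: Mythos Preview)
Your proof is correct and follows essentially the same route as the paper's own argument: scalarize the operator norm via $\|y\|_X=\sup_{x^*\in B_{X^*}}|\langle y,x^*\rangle|$, rewrite each pairing using the Radon--Nikod\'ym derivatives $\varphi^m_{x^*},\varphi^{m_1}_{x^*}$, interchange the two suprema, and identify the inner one with the $Z(\mu)^*$-norm. If anything, you are slightly more explicit than the paper about why $\langle m,x^*\rangle\ll\mu$ and why the derivatives act as bounded functionals on $Z(\mu)$.
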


\begin{proof} Note that by the inclusion requirement $Z(\mu) \subseteq
  L^1(m) \cap L^1(m_1)$, for all $f \in Z(\mu)$ we have that $\int f \,d
  m $ and $\int f \, d m_1$ make sense. 
The following direct computation gives the result.
\begin{align*}
\|I_m|_Z + \lambda I_{m_1}|_Z\|_{L(Z,X)}
&=\sup_{f\in B_{Z}}\|I_m|_Z(f)+ \lambda I_{m_1}|_Z(f)\|_X \\
&=\sup_{f\in B_{Z}}\Bigl\|\int f\,dm + \lambda \int f \,d m_1 \Bigr\|_X\\
&=\sup_{f\in B_{Z} }  \sup  \Bigl\{ \Bigl\langle \int f\,dm, x^*
\Bigr\rangle 
+ \Bigl\langle \int \lambda f \,d m_1, x^*\Bigr\rangle: x^*\in B_{X^*}\Bigr\}\\
&=
\sup_{f\in B_{Z}, x^* \in B_{X^*}}     \Bigl\{ \int f \,d\langle
m,x^*\rangle +  \lambda \int f  \,d \langle m_1,x^*\rangle \Bigr\}. 
\end{align*}

 Let $\varphi^m_{x^*}=\frac{d\langle m,x^* \rangle}{d \mu}$ and
 $\varphi^{m_1}_{x^*}=\frac{d\langle m_1,x^* \rangle}{d \mu}$ be the
 Radon-Nikod\'ym derivatives with respect to the fixed measure $\mu$;
 note that they belong  to $Z^*$. 
Then
\begin{align*}
\sup_{f\in B_{Z}, x^* \in B_{X^*}}     
\Bigl\{ \int f\, d\langle m,x^*\rangle &+ \lambda \int  f\,  d\langle m, x^*
\rangle \Bigr\} \\
&=\sup_{f\in B_{Z}, x^* \in B_{X^*}}     
\Bigl\{ \int f\varphi^m_{x^*} d \mu + \int
 \lambda f \varphi^{m_1}_{x^*} d \mu \Bigr\} \\
&=\sup_{f\in B_{Z}, x^* \in B_{X^*}}     
\Bigl\{ \int f (\varphi^m_{x^*} + \lambda \varphi^{m_1}_{x^*} ) d \mu \Bigr\}\\
&=\sup_{x^* \in B_{X^*}} 
\|\varphi^m_{x^*} + \lambda \varphi^{m_1}_{x^*}\|_{Z^*}. \qedhere
\end{align*}
\end{proof}

Let us show some direct applications.
Let $X(\mu)$ be an order continuous Banach function space over a finite measure
space $(\Omega,\Sigma,\mu)$. We will say that a vector measure $m:\Sigma \to X$ \textit{represents} $X(\mu)$ if $L^1(m)=X(\mu)$ isometrically and in the order.
We consider first the case when a space $X(\mu)$ is represented by a vector measure $m$ having values in the same space $X(\mu)$.
Thus,
let us show a particular case that will be relevant later on in the
paper. Suppose that $m$ and $m_1$ are vector measures having values in
the order continuous Banach function space $X(\mu)$ such that
$L^1(m)=L^1(m_1)=X(\mu)$ (isometrically), and in such a way that
$I_{m_1}=\mathrm{Id}:X(\mu) \to X(\mu)$, the identity map; this
happens if $m_1$ is the vector measure $\Sigma \ni A \mapsto m_1(A):=
\chi_A \in X(\mu)$,  and in this case $\mathrm{Id}(f)= \int f \,d m_1
=f$. Then, 
$$
\|\mathrm{Id} + I_m\|= \sup_{h \in B_{(X(\mu))'}} \|h +  \varphi^{m}_h\|,
$$
and consequently:

\begin{corollary}
For each vector measure $m: \Sigma \to X(\mu)$ representing $X(\mu)$, the Daugavet equation
$
\|\mathrm{Id} + I_m\|= 2
$
holds if and only if
$$
\sup_{h \in B_{(X(\mu))'}} 
\Bigl\|h + \frac{ d \langle m, h \rangle}{d  \mu}\Bigr\| = 2 
$$ 
 holds.
\end{corollary}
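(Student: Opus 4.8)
The plan is to obtain this corollary as an immediate specialization of the Lemma stated just above it. Concretely, I would apply that Lemma with $X_0=X_1=X=X(\mu)$, with $Z(\mu)=X(\mu)$, with $\lambda=1$, and with $m_1$ taken to be the vector measure $\Sigma\ni A\mapsto\chi_A\in X(\mu)$; this is exactly the choice made in the paragraph preceding the corollary, for which $I_{m_1}=\mathrm{Id}$. The first task is then to check that the hypotheses of the Lemma hold here. Since $m$ represents $X(\mu)$ it is equivalent to $\mu$, and $m_1$ is trivially equivalent to $\mu$ as well; one has $L^1(m)=L^1(m_1)=X(\mu)$ (the second identity being the standard fact that the measure $A\mapsto\chi_A$ has $L^1$-space equal to $X(\mu)$ isometrically), so both inclusions $Z(\mu)\subseteq L^1(m)\cap L^1(m_1)$ and $X_0+X_1\subseteq X$ are automatic. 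Finally, because $X(\mu)$ is order continuous, $X(\mu)^*$ is represented isometrically by the K\"othe dual $(X(\mu))'$ through the pairing $\langle f,h\rangle=\int fh\,d\mu$, so one may take $\mu$ itself as the measure representing the duality in the sense of Lemma~\ref{defphi} and replace $\sup_{x^*\in B_{X^*}}$ by $\sup_{h\in B_{(X(\mu))'}}$. This bookkeeping is the only genuine verification involved, and it is the step where I expect whatever (minor) care is needed.

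Next I would read off the two Radon-Nikod\'ym derivatives appearing in the Lemma's conclusion. For $m_1$ one has $\langle m_1(A),h\rangle=\langle\chi_A,h\rangle=\int_A h\,d\mu$ for every $h\in(X(\mu))'$, hence $\varphi^{m_1}_h=\frac{d\langle m_1,h\rangle}{d\mu}=h$ (which is precisely why $I_{m_1}=\mathrm{Id}$), while $\varphi^m_h=\frac{d\langle m,h\rangle}{d\mu}$ by definition. Substituting $\lambda=1$ into the Lemma then gives
\[
\|\mathrm{Id}+I_m\|=\sup_{h\in B_{(X(\mu))'}}\Bigl\|h+\frac{d\langle m,h\rangle}{d\mu}\Bigr\|_{(X(\mu))'},
\]
which is the identity displayed right before the corollary.

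To finish, I would invoke the normalizations $\|\mathrm{Id}\|=1$ and $\|I_m\|=1$ (the latter valid for every vector measure, and here a fortiori since $m$ represents $X(\mu)$ isometrically): these say that the Daugavet equation $\|\mathrm{Id}+I_m\|=\|\mathrm{Id}\|+\|I_m\|$ is literally the assertion $\|\mathrm{Id}+I_m\|=2$. Combining this with the displayed identity yields the stated equivalence at once. No real obstacle arises here: the substance is already packaged in the preceding Lemma and in the identification $X(\mu)^*=(X(\mu))'$ for order continuous Banach function spaces, and the remaining argument is just substitution and reading off norms.
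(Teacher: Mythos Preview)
Your proposal is correct and follows exactly the paper's approach: the paper derives the identity $\|\mathrm{Id}+I_m\|=\sup_{h\in B_{(X(\mu))'}}\|h+\varphi^m_h\|$ in the paragraph preceding the corollary by specializing the Lemma to $m_1(A)=\chi_A$ (so $I_{m_1}=\mathrm{Id}$, $\varphi^{m_1}_h=h$) and then states the corollary as an immediate consequence via $\|\mathrm{Id}\|=\|I_m\|=1$. Your write-up simply makes explicit the verification of the Lemma's hypotheses and the identification $X(\mu)^*=(X(\mu))'$ under order continuity, which the paper leaves implicit.
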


\begin{example}
Consider again our canonical example.  Take the $L^1[0,1]$-valued vector measures defined as
$m_1(A):=\chi_A$ and $m(A):= \mu(A) \cdot \chi_{[0,1]}$ for each Lebesgue measurable set $A$. Both of them give $L^1[0,1]$ as space of integrable functions isometrically; for them, we have that
$$
\sup_{h \in B_{L^\infty[0,1]}} 
\Bigl\|h + \frac{ d \langle m, h \rangle}{d \mu}\Bigr\| = 2.
$$
Taking into account that the Radon-Nikod\'ym derivative of the measure $A \mapsto \mu(A)  \int \chi_{[0,1]} h \, d \mu = \langle m, h \rangle(A)$ is
$$
\frac{ d \langle m, h \rangle}{d \mu} = \chi_{[0,1]} \, \int_{[0,1]} h \,d \mu,
$$
we obtain that in this case, this is equivalent to
$$
\sup_{h \in B_{L^\infty[0,1]}} 
\Bigl\|h + \chi_{[0,1]} \int_{[0,1]} h \,d \mu \Bigr\| = 2;
$$
in fact, the same result is true when we replace the function
$\chi_{[0,1]}$ by any other norm one function of $L^1[0,1]$, since all
of them define rank 1 operators. 

\end{example}

There are other cases of Banach lattices of integrable functions that
also have the Daugavet property  and are not $L^1$-spaces. Let us
explain some examples. It is well known that, if $\mu$ is a nonatomic
probability measure and $E$ is a Banach space, the space $L^1(\mu,E)$
of Bochner integrable functions has the Daugavet property (see
\cite[p.~858, Example]{ams2000}).
Thus, in the case that $E$ is also a Banach lattice, it is known that
$L^1(\mu,E)$ is a Banach lattice too that has the Daugavet property. 
Another interesting example of a Banach space having the Daugavet property
is the Bochner space $L^\infty(\mu,E)$: it has the Daugavet property
is $E$ has it or $\mu$ is nonatomic (see \cite[Th.~5]{marvill})
it is also a Banach lattice if $E$ is
so. However, since all the spaces $L^1(m)$ for a vector measure $m$
are order continuous Banach function spaces, the application of the
previously mentioned space as example of our class is restricted to
finite $\ell^\infty$-sums of Banach function spaces having the
Daugavet property (this case was already considered in
\cite{wojtDaugavet}). 
Anyway, $c_0$-sums of Banach function spaces with 
the Daugavet property  have also this property, as can be deduced from
Proposition~2.16 in \cite{ams2000}: take for example a disjoint
countable measurable  partition $\{A_i:i \in \mathbb N\}$ of $[0,1]$;
the $c_0$-sum of $L^1(\mu|_{A_i})$ has the Daugavet property. This
space satisfies the order continuity requirement, and so it can be
represented as an $L^1(m)$ of a vector measure $m$. 

However,  it must be said that the class of Banach function spaces
having the Daugavet property is rather small. The results in
\cite{ackama12},  \cite{ackama15} and \cite{KMMW} show this. Thus, 
Theorem~3.6 in \cite{ackama15} states that for a rearrangement
invariant  Banach function space $X(\mu)$ over a finite measure $\mu$
with the weak Fatou property, the Daugavet property implies that
$X(\mu)$ coincides either with $L^\infty(\mu)$ or with $L^1(\mu)$
isometrically. Since all the spaces $L^1(m)$ of a vector measure $m$
are order continuous, our class is restricted to the case of 
$L^1$-spaces of finite measures. Moreover, if $L^1(m)$ is an Orlicz space
with the Luxemburg norm, for $m$ being non-atomic, and has the
Daugavet property, then it must be isometric to $L^1$
\cite[Cor.~4.3]{ackama15}.

Therefore we obtain the following result.

\begin{proposition}
Let $X(\mu)$ be an order continuous Banach function space with the
Daugavet property (in particular, if $X(\mu)=L^1(\mu)$ for a
non-atomic measure $\mu$). The following assertions hold.  
\begin{itemize}
\item[(1)]
Let $m$ be an $X(\mu)$-valued countably additive vector measure
representing $X(\mu)$ such that $I_m$  is a Radon-Nikod\'ym
operator. Then 
$$
\| \mathrm{Id} +  I_m \|=
\sup_{h \in B_{(X(\mu))'}} \|h + \lambda \varphi^{m}_h\|_{(X(\mu))'} = 2.
$$
\item[(2)] There is a vector measure $m$ representing $X(\mu)$ such that its integration map $I_m$  is not
  a pointwise unconditional sum (maybe uncountable) of Radon-Nikod\'ym
  operators. 
In fact, the same result holds if $X(\mu)$ is isomorphic
  to a space with the Daugavet property. 
\item[(3)]
 Let $\mathcal M$ be the normed space of all linear combinations of
 finite rank Radon-Nikod\'ym derivative operators associated to vector
 measures representing $X(\mu)$. 
  Then the integration map associated to the vector measure  $\Sigma
 \ni A \mapsto \chi_A \in X(\mu)$  representing $X(\mu)$ --
 the identity map --
   cannot be approximated as a (maybe uncountable)
    pointwise unconditionally convergent series of operators in $\mathcal M$.
\end{itemize}

\end{proposition}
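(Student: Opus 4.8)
The plan is to deduce all three assertions from the Corollary immediately preceding the statement, from Theorem~A, and from the classical fact recalled earlier that in a Banach space with the Daugavet property the equation $\|\mathrm{Id}+T\|=1+\|T\|$ holds not merely for rank-one operators but for every weakly compact operator and, more generally, for every Radon-Nikod\'ym operator \cite{ams2000}.

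For part~(1), note first that since $m$ represents $X(\mu)$ we have $L^1(m)=X(\mu)$ isometrically, hence $\|I_m\|_{L(X(\mu),X(\mu))}=1$ and $I_m$ may be regarded as an operator on $X(\mu)$. The left-hand equality in the display is exactly the content of the Corollary above (the case $\lambda=1$ of the preceding Lemma, applied with $m_1(A)=\chi_A$), and the right-hand side becomes $2$ as soon as we invoke that $X(\mu)$ has the Daugavet property and that $I_m$ is a Radon-Nikod\'ym operator, which gives $\|\mathrm{Id}+I_m\|=1+\|I_m\|=2$.

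For part~(2), take $m$ to be the vector measure $\Sigma\ni A\mapsto\chi_A\in X(\mu)$; then $m$ represents $X(\mu)$ and $I_m=\mathrm{Id}$. Let $M\subseteq L(X(\mu),X(\mu))$ be the subspace of all Radon-Nikod\'ym operators. By the extended Daugavet equation, $\|\mathrm{Id}+T\|=1+\|T\|\ge 1+\|T\|$ for every $T\in M$, so Theorem~A applies with $G=\mathrm{Id}$ and $C=1$: for any (possibly uncountable) pointwise unconditionally convergent series $\hat T=\sum_{n\in\Gamma}T_n$ with $T_n\in M$ we get $\|\mathrm{Id}-\hat T\|\ge 1>0$, hence $I_m=\mathrm{Id}\ne\hat T$. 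For the isomorphic version, let $U\colon X(\mu)\to Y$ be an isomorphism onto a space $Y$ with the Daugavet property, and suppose, for contradiction, that $\mathrm{Id}_{X(\mu)}=\sum_n T_n$ is a pointwise unconditionally convergent series of Radon-Nikod\'ym operators on $X(\mu)$. Since the Radon-Nikod\'ym operators are stable under composition with isomorphisms, and since $U$ carries pointwise unconditionally convergent series to pointwise unconditionally convergent series, $\mathrm{Id}_Y=\sum_n U T_n U^{-1}$ would be a pointwise unconditionally convergent series of Radon-Nikod\'ym operators on $Y$, contradicting what has just been proved for $Y$.

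For part~(3), every $T\in\mathcal M$ is a finite-rank operator on $X(\mu)$, since a finite linear combination of finite-rank Radon-Nikod\'ym derivative operators of $X(\mu)$-valued measures has finite-dimensional range; in particular $T$ is weakly compact, so the Daugavet property of $X(\mu)$ gives $\|\mathrm{Id}+T\|=1+\|T\|$ for every $T\in\mathcal M$. Theorem~A with $G=\mathrm{Id}$, $M=\mathcal M$ and $C=1$ then yields $\|\mathrm{Id}-\hat T\|\ge 1$ for every pointwise unconditionally convergent series $\hat T=\sum_n T_n$ with $T_n\in\mathcal M$, so the identity map, which is the integration map of the vector measure $A\mapsto\chi_A$, cannot be represented in this form. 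The whole argument is bookkeeping once Theorem~A is in hand; the only points needing a little care are verifying that ``$m$ represents $X(\mu)$'' forces $\|I_m\|=1$, so that the Daugavet constant is exactly $C=1$, and, in part~(2), checking that conjugation by an isomorphism preserves both membership in the Radon-Nikod\'ym class and the pointwise unconditional convergence of the series.
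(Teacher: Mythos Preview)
Your proposal is correct and follows exactly the approach the paper takes: part~(1) from the preceding Corollary together with the Daugavet property, and parts~(2) and~(3) from Theorem~A combined with the fact that Radon--Nikod\'ym (respectively finite-rank) operators satisfy the Daugavet equation. You have supplied considerably more detail than the paper's three-sentence proof, in particular your explicit choice $m(A)=\chi_A$ in part~(2) and the conjugation argument for the isomorphic version (which the paper does not spell out), but the underlying ideas are identical.
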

\begin{proof}
The first statement is a direct consequence of the Daugavet property
of $X(\mu)$. The second one can be proved as a direct application of
Theorem 2.9 in \cite{BoKa}, taking into account that again by the
Daugavet property of $X(\mu)$, Radon-Nikod\'ym  operators satisfy the
Daugavet equation. The third statement is the result of considering in
particular those operators that are finite sums of finite rank
integration maps representing $X(\mu)$. 
\end{proof}

\begin{remark}
A similar version of  the result above can be obtained if we replace
the operator $\mathrm{Id}$ by any integration operator $I_m$ that is a Daugavet
center, without the assumption that $X(\mu)$ has the Daugavet
property. The reader can find information about this notion in
\cite{bosen} and \cite{BoKa}. In fact, using Remark 2.10 in 
\cite{BoKa}, more can be said. If a Banach function space $X(\mu)$ can
be represented by an integration map -- maybe defined in a different
Banach space $F$ -- that is a Daugavet center, then  $X(\mu)$ cannot
have an unconditional basis (countable or uncountable) or be
represented as unconditional sum of reflexive spaces. The reason is
that then $I_m$ cannot be written as a pointwise unconditionally
convergent series of weakly compact operators; in particular, of
course $I_m$ cannot be a weakly compact operator. 

The structure of the spaces $L^1(m)$ with $I_m$ being a Daugavet center is,
 as a consequence of the previous results and comments, close to being
 isomorphic to spaces 
  $L^1$ of scalar measures. In fact, in \cite{BoKa} it is proved that
 every Daugavet center 
   fixes a copy of $\ell^1$, and so $L^1(m)$ cannot be a reflexive space.

\end{remark}


\begin{thebibliography}{99}


\bibitem{ackama12}
M.D. Acosta, A. Kami\'nska, and M. Masty{\l}o, 
\textit{The Daugavet   property and weak neighborhoods in Banach
  lattices}, 
J. Convex Anal., \textbf{19}(3)  (2012), 875--912. 

\bibitem{ackama15}
M.D. Acosta, A. Kami\'nska, and M. Masty{\l}o, 
\textit{The Daugavet   property in rearrangement invariant spaces}, 
Trans. Amer. Math. Soc. \textbf{367}(6) (2015), 4061--4078.

\bibitem{bosen}
T.V. Bosenko, 
\emph{Daugavet centers and direct sums of Banach spaces}, 
Cent. Eur. J. Math. \textbf{8}(2) (2010), 346--356.

\bibitem{BoKa}
T.V. Bosenko and V. Kadets, 
\emph{Daugavet centers}, 
J. Math. Phys. Anal. Geom. \textbf{6}(1) (2010), 3--20.

\bibitem{BrSaWe}
S. Brach, E.A. S\'anchez P\'erez, and D. Werner, 
\emph{The Daugavet equation for bounded vector valued functions}, 
to appear in Rocky Mountain J. Math. 

\bibitem{dies}
J.~Diestel and J.J. Uhl, Jr., 
\emph{Vector Measures}, 
Mathematical Surveys, 15,  Amer. Math. Soc. Providence, R.I., 1977.

\bibitem{GRO-mem}
A.~Grothendieck, 
\emph{Produits tensoriels topologiques et espaces nucl\'eaires}, 
Mem. Amer. Math. Soc. \textbf{16} (1955).

\bibitem{KMMW}
V.~Kadets, M.~Mart\'\i n, J.~Mer\'\i, and D. Werner, 
\emph{Lushness, numerical index~$1$ and the Daugavet property in
  rearrangement invariant spaces}, 
Can. J. Math. \textbf{65}(2) (2013),  331--348.

\bibitem{ams2000}
V.M. Kadets, R.V. Shvidkoy, G.G. Sirotkin, and D. Werner,
\textit{Banach spaces with the Daugavet property}, 
Trans. Amer. Math. Soc. \textbf{352}(2) (2000), 855--873.

\bibitem{kiku}
M. Kikuchi,
\textit{A note on the convergence of martingales in Banach function spaces},
Analysis Mathematica, \textbf{25} (1999), 265--276.

\bibitem{LiNyOja2000}
\AA. Lima, O. Nygaard, and  E. Oja,
\textit{Isometric factorization of weakly compact operators and the
  approximation property}, 
Israel J. Math. \textbf{119} (2000), 325--348. 

\bibitem{LiOja2004}
\AA. Lima and   E. Oja,
\textit{Ideals of operators, approximability in the
strong operator topology, and the approximation property}, 
Michigan Math. J. \textbf{52}(2) (2004), 253--265.

\bibitem{marvill}
M. Mart\'\i n and A.R. Villena, 
\textit{Numerical index and the Daugavet  property for $L_\infty
  (\mu,X)$},  
Proc. Edinb. Math. Soc., II. Ser. \textbf{46}(2)  (2003), 415--420.

\bibitem{libro}
S. Okada, W. Ricker, and E.A. S\'anchez P\'erez, 
\textit{Optimal Domains and Integral Extensions of Operators Acting in Function
Spaces},  
Operator Theory. Advances and Applications, Vol.~180, Birkh\"auser, Basel, 2008.

\bibitem{pisier}
G. Pisier, 
\textit{Martingales in Banach Spaces}, 
Cambridge University Press, Cambridge, 2016.  

\bibitem{Dirk-IrBull}
D. Werner, 
\textit{Recent progress on the Daugavet property},
Irish Math. Soc. Bulletin \textbf{46} (2001), 77--97.

\bibitem{wojtDaugavet}
P. Wojtaszczyk,
\textit{Some remarks on the Daugavet equation},
Proc. Amer. Math. Soc. \textbf{115}(4)  (1992), 1047--1052.




\end{thebibliography}
\end{document}